\newcommand{\C}{\mathbb{C}}
\newcommand{\ru}{\rho}                                              
\newcommand{\n}[1]{\left\vert {#1} \right\vert}                    
\newcommand{\N}[1]{\left\Vert {#1} \right\Vert}                    
\newcommand{\inner}[2]{\left\langle {#1} , {#2} \right\rangle}     
\newcommand{\Vol}[1]{{\rm Vol } \left( {#1} \right)}               
\newcommand{\cent}{t}                
\newcommand{\Kr}{M_{\ru}}
\begin{document}

 \newtheorem{thm}{Theorem}[section]
 \newtheorem{cor}[thm]{Corollary}
 \newtheorem{lem}[thm]{Lemma}
 \newtheorem{prop}[thm]{Proposition}
 \theoremstyle{definition}
 \newtheorem{defn}[thm]{Definition}
 \theoremstyle{remark}
 \newtheorem{rem}[thm]{Remark}
 \newtheorem*{ex}{Example}
 \numberwithin{equation}{section}

\title{Positive Toeplitz operators on the Bergman space of a minimal bounded homogeneous domain} 
\author{Satoshi Yamaji}
\date{}

\maketitle

{\bf Abstract} : 
Necessary and sufficient conditions for positive Toeplitz operators 
on the Bergman space of a minimal bounded homogeneous domain 
to be bounded or compact are described in terms of the Berezin transform, the averaging function and the Carleson property. 

{\bf Mathematics Subject Classification (2010)} : Primary 47B35; Secondary 32A25.

{\bf Key words} : Toeplitz operator, Bergman space, bounded homogeneous domain, minimal domain, Carleson measure.

\section{Introduction}
In 1988, Zhu obtained the conditions in order that a positive Toeplitz operator is bounded or compact on 
the Bergman space of a bounded symmetric domain in its Harish-Chandra realization \cite{Zhu1}. 
In this paper, we extend this result for the case that the domain is a minimal bounded homogeneous domain. 

Let $D$ be a bounded homogeneous domain in $\C^n$, $dV(z)$ the Lebesgue measure, 
$\mathcal{O}(D)$ the space of all holomorphic functions on $D$, 
and $L^2_a(D)$ the Bergman space $L^2(D,dV) \cap \mathcal{O}(D)$ of $D$. 
We denote by $K_{D}$ the Bergman kernel of $D$, that is, the reproducing kernel of $L^2_a(D)$. 
It is known that $\mathcal{U}$ is a minimal domain with a center $\cent$ if and only if 
$K_{\mathcal{U}}(z,\cent) = K_{\mathcal{U}}(\cent,\cent)$ for any $z \in \mathcal{U} $ (see \cite[ Theorem 3.1]{MM}). 
For example, the open unit disk $\mathbb{D}$, the open unit ball $\mathbb{B}^n$ and the bidisk $\mathbb{D} \times \mathbb{D}$ are minimal domains. 
It is known that every bounded homogeneous domain is biholomorphic to a minimal bounded homogeneous domain (see \cite{I-Y}). 

Let $\mu$ be a complex Borel measure on $\mathcal{U}$. 
The Toeplitz operator $T_{\mu}$ with symbol $\mu$ is defined by 
\begin{eqnarray*}
T_{\mu} f(z) := \int_{\mathcal{U}} K_{\mathcal{U}}(z,w) f(w) \, d\mu(w) \ \ \ (z \in \mathcal{U}) .
\end{eqnarray*}
If $d\mu(w) = u(w) dV(w)$ holds for some $u \in L^{\infty}(\mathcal{U})$, we have $T_{\mu} f =P(uf)$, 
where $P$ is the orthogonal projection from $L^2(\mathcal{U})$ onto $L^2_a(\mathcal{U})$. 
Therefore, $T_{\mu}$ is a bounded operator on $L^2_a(\mathcal{U})$ with $\N{T_{\mu}} \leq \N{u}_{\infty}$. 
We consider the condition of $\mu$ that $T_{\mu}$ is a bounded (or compact) operator on $L^2_a(\mathcal{U})$. 

A Toeplitz operator is called positive if its symbol is positive. 
A result on positive Toeplitz operator of a bounded symmetric domain was obtained in \cite{Zhu1}. 
Zhu proved that the boundedness of the positive Toeplitz operator on $L^2_a(\Omega)$ is equivalent to the boundedness of the 
Berezin transform $\widetilde{\mu}$ or the averaging function $\widehat{\mu}$ on $\Omega$. 
The key lemma is \cite[Lemma 8]{BCZ}. 
The proof of this lemma is based on some characteristic properties of a bounded symmetric domain in its Harish-Chandra realization. 
It is difficult to generalize directly their argument for a bounded homogeneous domain, which is not necessarily symmetric. 
However, the following theorem enables us to prove the same key estimate (Lemma \ref{Lemma1}) for the Bergman kernel of a minimal bounded homogeneous domain.

\begin{thm}[\mbox{\cite[Theorem 1.1]{I-Y}}] \label{Intro1} 
Let $\mathcal{U} \subset \C^n$ be a minimal bounded homogeneous domain. 
Take any $\ru>0$. Then, there exists $C_{\ru}>0$ such that 
$$  C_{\ru}^{-1} \leq \n{\frac{K_{\mathcal{U}}(z,a)}{K_{\mathcal{U}}(a,a)} } \leq C_{\ru} $$
for all $z,a \in \mathcal{U}$ with $\beta (z,a) \leq \ru$, 
where $\beta$ denotes the Bergman distance on $\mathcal{U}$. 
\end{thm}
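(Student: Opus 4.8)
The plan is to transport everything to the center $\cent$ of $\mathcal{U}$ using homogeneity and the transformation law for the Bergman kernel, and then exploit the defining property $K_{\mathcal{U}}(z,\cent)=K_{\mathcal{U}}(\cent,\cent)$ of a minimal domain to collapse the kernel to a ratio of Jacobians. Recall that for a biholomorphism $\varphi$ of $\mathcal{U}$ one has $K_{\mathcal{U}}(z,w)=J\varphi(z)\,K_{\mathcal{U}}(\varphi(z),\varphi(w))\,\overline{J\varphi(w)}$, where $J\varphi$ is the holomorphic Jacobian determinant. Since $\mathcal{U}$ is homogeneous I would, for each $a$, choose an automorphism $\varphi_a$ with $\varphi_a(a)=\cent$. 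Setting $w=a$ and using minimality in the form $K_{\mathcal{U}}(\varphi_a(z),\cent)=K_{\mathcal{U}}(\cent,\cent)$ gives $K_{\mathcal{U}}(z,a)=K_{\mathcal{U}}(\cent,\cent)\,J\varphi_a(z)\,\overline{J\varphi_a(a)}$, while the case $z=a$ gives $K_{\mathcal{U}}(a,a)=K_{\mathcal{U}}(\cent,\cent)\,\n{J\varphi_a(a)}^2$. Dividing yields the identity
\[
\frac{K_{\mathcal{U}}(z,a)}{K_{\mathcal{U}}(a,a)}=\frac{J\varphi_a(z)}{J\varphi_a(a)} .
\]
Because $\beta$ is $\mathrm{Aut}(\mathcal{U})$-invariant, $w:=\varphi_a(z)$ lies in the fixed ball $\{\,\zeta:\beta(\zeta,\cent)\le\ru\,\}$, which is compact by completeness of the Bergman metric. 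Thus the theorem is equivalent to a uniform two-sided bound on the Jacobian ratio as $w$ ranges over this fixed compact set and $a$ ranges over all of $\mathcal{U}$.

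Next I would isolate the easy half via the invariant quantity $\Phi(z,a):=\n{K_{\mathcal{U}}(z,a)}^2/\bigl(K_{\mathcal{U}}(z,z)K_{\mathcal{U}}(a,a)\bigr)$, which satisfies $\Phi(\varphi(z),\varphi(a))=\Phi(z,a)$ for every automorphism $\varphi$ and obeys $0\le\Phi\le1$. Using invariance I can move $a$ to $\cent$, and minimality then gives the explicit formula $\Phi(z,\cent)=K_{\mathcal{U}}(\cent,\cent)/K_{\mathcal{U}}(z,z)$, which is continuous and strictly positive on the compact ball above; hence $\Phi(z,a)\ge c_{\ru}>0$ whenever $\beta(z,a)\le\ru$. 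Combining this with the elementary identity $\n{K_{\mathcal{U}}(z,a)/K_{\mathcal{U}}(a,a)}^2=\Phi(z,a)\,K_{\mathcal{U}}(z,z)/K_{\mathcal{U}}(a,a)$ and the bounds $c_{\ru}\le\Phi\le1$, the whole theorem reduces to a single on-diagonal estimate: it suffices to prove the Harnack-type inequality $K_{\mathcal{U}}(z,z)\le C_{\ru}\,K_{\mathcal{U}}(a,a)$ for $\beta(z,a)\le\ru$, the reverse inequality then following from the symmetry of $\beta$.

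The remaining diagonal estimate is the heart of the matter, and I expect it to be the main obstacle. I would integrate the potential $u(\zeta):=\log K_{\mathcal{U}}(\zeta,\zeta)$ along a Bergman geodesic joining $a$ to $z$, which gives $\n{u(z)-u(a)}\le\beta(z,a)\cdot\sup_{\mathcal{U}}\n{\partial u}_{g^{-1}}$, where $g$ denotes the Bergman metric tensor inducing $\beta$. So everything rests on showing that $W(\zeta):=g^{i\bar j}(\zeta)\,\partial_i u(\zeta)\,\overline{\partial_j u(\zeta)}$ is bounded on $\mathcal{U}$. Here minimality helps again: since $K_{\mathcal{U}}(z,\cent)$ is constant in $z$, its holomorphic derivative vanishes, so $\partial u(\cent)=0$; transporting by $\varphi_a$ and using the invariance of $g$ together with the anomalous transformation $\varphi_a^{*}(\partial u)=\partial u-\partial\log J\varphi_a$, I would obtain the clean expression $W(a)=\n{\partial\log J\varphi_a(\cent)}^2_{g^{-1}(\cent)}$, reducing boundedness of $W$ to a bound, uniform in $a$, on the second-order data of the automorphisms at the single point $\cent$. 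Establishing this uniform bound is precisely where the full structure of a bounded homogeneous domain must enter — through the bounded geometry of the Bergman metric, or equivalently through the explicit Siegel-domain (normal $j$-algebra) form of $K_{\mathcal{U}}$ — rather than minimality alone; the unit disk, where $W(a)=2\n{a}^2$ is bounded exactly because $\n{a}<1$, is the model case to keep in mind.

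Granting the boundedness of $W$, the geodesic integration delivers $K_{\mathcal{U}}(z,z)\le C_{\ru}\,K_{\mathcal{U}}(a,a)$ for $\beta(z,a)\le\ru$, and by symmetry the two-sided comparison $C_{\ru}^{-1}\le K_{\mathcal{U}}(z,z)/K_{\mathcal{U}}(a,a)\le C_{\ru}$. Feeding this into the identity of the second paragraph together with $c_{\ru}\le\Phi\le1$ gives $c_{\ru}C_{\ru}^{-1}\le\n{K_{\mathcal{U}}(z,a)/K_{\mathcal{U}}(a,a)}^2\le C_{\ru}$, which is exactly the asserted estimate after renaming the constant. The transformation-law reduction and the invariant lower bound on $\Phi$ are routine; the real work, and the only place non-symmetric homogeneity is genuinely used, is the uniform boundedness of $W$.
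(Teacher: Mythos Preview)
The paper does not prove this theorem: it is quoted from the companion paper \cite{I-Y} (restated as Theorem~\ref{Ishi}) and used as a black box throughout. So there is no ``paper's own proof'' to compare against; what you have written is an attempted reconstruction of a result the present paper takes for granted.

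That said, your reduction is sound and well organized. The identity $K_{\mathcal{U}}(z,a)/K_{\mathcal{U}}(a,a)=J\varphi_a(z)/J\varphi_a(a)$ is exactly the content of Lemma~\ref{HomBergKer} here, and your use of the biholomorphic invariant $\Phi(z,a)=\n{K_{\mathcal{U}}(z,a)}^2/(K_{\mathcal{U}}(z,z)K_{\mathcal{U}}(a,a))$ together with minimality to get a positive lower bound on the fixed compact ball $B(\cent,\ru)$ is correct. The reduction to the diagonal Harnack inequality $K_{\mathcal{U}}(z,z)\le C_{\ru}K_{\mathcal{U}}(a,a)$ for $\beta(z,a)\le\ru$ is also correct; indeed the paper derives the converse inequality (\ref{Meanvalue7}) as a \emph{consequence} of Theorem~\ref{Ishi}, which confirms the equivalence.

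The genuine gap is the one you yourself flag: the uniform bound on $W(\zeta)=g^{i\bar j}\partial_i u\,\overline{\partial_j u}$ with $u=\log K_{\mathcal{U}}(\zeta,\zeta)$. Your transport computation has a small slip --- after pulling back by $\varphi_a$ one gets $W(a)=\n{\partial\log J(\varphi_a^{-1},\cent)}^2_{g^{-1}(\cent)}$ rather than $\n{\partial\log J\varphi_a(\cent)}^2_{g^{-1}(\cent)}$ --- but the conclusion is the same: one needs a bound, uniform over $a\in\mathcal{U}$, on second-order data of the automorphisms at the fixed point $\cent$. This is exactly where the argument ceases to be soft. For a general bounded homogeneous domain there is no a priori reason the family $\{\varphi_a\}_{a\in\mathcal{U}}$ should have uniformly controlled second derivatives at $\cent$, and establishing it essentially requires the explicit structure theory (realization as a Siegel domain of type~II, normal $j$-algebra description of $K_{\mathcal{U}}$). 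The actual proof in \cite{I-Y} proceeds along those structural lines rather than via the differential-geometric reduction you outline; your approach is a legitimate alternative route, but it does not bypass the hard structural input --- it only relocates it.
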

Using Lemma $\ref{Lemma1}$ and Zhu's method (see \cite{Zhu1} or \cite{Zhu3}), 
we deduce a certain relation of averaging functions to the Carleson measures (Theorem \ref{thmCarleson}). 
Moreover, we obtain the following theorem. 
\begin{thm} \label{Bddness*} 
Let $\mathcal{U} \subset \C^n$ be a minimal bounded homogeneous domain 
and $\mu$ a positive Borel measure on $\mathcal{U}$. 
Then the following conditions are all equivalent.\\
$(a)$ \ $T_{\mu}$ is a bounded operator on $L^2_a(\mathcal{U})$.\\
$(b)$ \ The Berezin transform $\widetilde{\mu}(z) $ is a bounded function on $\mathcal{U}$.\\
$(c)$ \ For all $p \geq 1$, $\mu$ is a Carleson measure for $L^p_a(\mathcal{U})$. \\
$(d)$ \ The averaging function $\widehat{\mu}(z) $ is bounded on $\mathcal{U}$.
\end{thm}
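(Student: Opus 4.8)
The plan is to establish the chain of equivalences in Theorem \ref{Bddness*} by proving a cycle of implications, with the key estimate of Theorem \ref{Intro1} (used through Lemma \ref{Lemma1}) providing the uniform control on the Bergman kernel that replaces the symmetric-space computations of \cite{BCZ,Zhu1}. I would organize the argument around the Bergman-metric balls $D(z,\ru) := \{ w \in \mathcal{U} : \beta(z,w) \leq \ru \}$ and the normalized reproducing kernels $k_z := K_{\mathcal{U}}(\cdot,z)/\sqrt{K_{\mathcal{U}}(z,z)}$, in terms of which the Berezin transform is $\widetilde{\mu}(z) = \inner{T_{\mu} k_z}{k_z} = \int_{\mathcal{U}} \n{k_z(w)}^2 \, d\mu(w)$ and the averaging function is $\widehat{\mu}(z) = \mu(D(z,\ru))/\Vol{D(z,\ru)}$.

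The implication $(a) \Rightarrow (b)$ is immediate, since $\N{k_z}_2 = 1$ gives $\widetilde{\mu}(z) = \inner{T_{\mu} k_z}{k_z} \leq \N{T_{\mu}}$. For $(b) \Rightarrow (d)$, I would use Theorem \ref{Intro1}: on a fixed ball $D(z,\ru)$ the ratio $K_{\mathcal{U}}(w,z)/K_{\mathcal{U}}(z,z)$ is bounded below in modulus by $C_{\ru}^{-1}$, so that $\n{k_z(w)}^2 = \n{K_{\mathcal{U}}(w,z)}^2 / K_{\mathcal{U}}(z,z) \geq C_{\ru}^{-2} K_{\mathcal{U}}(z,z)$ on $D(z,\ru)$. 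Combining this with the standard homogeneous-domain estimate relating $K_{\mathcal{U}}(z,z)^{-1}$ to $\Vol{D(z,\ru)}$ (a consequence of the invariance of the Bergman kernel under the transitive automorphism group), one gets $\widehat{\mu}(z) \lesssim \widetilde{\mu}(z)$, whence $(d)$. The direction $(d) \Rightarrow (c)$ is where Theorem \ref{thmCarleson} enters: that theorem, established by Zhu's method via Lemma \ref{Lemma1}, already encodes the equivalence between boundedness of the averaging function and the Carleson property for $L^p_a(\mathcal{U})$, so the implication should follow by quoting it (and choosing the lattice/covering of $\mathcal{U}$ by Bergman balls that it supplies). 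Finally, $(c) \Rightarrow (a)$ follows by taking $p = 2$: the Carleson condition for $L^2_a(\mathcal{U})$ says precisely that the embedding $L^2_a(\mathcal{U}) \hookrightarrow L^2(\mathcal{U},d\mu)$ is bounded, and since $\inner{T_{\mu} f}{g} = \int_{\mathcal{U}} f \overline{g} \, d\mu$ for $f,g \in L^2_a(\mathcal{U})$, the Cauchy--Schwarz inequality in $L^2(d\mu)$ then bounds $\n{\inner{T_{\mu} f}{g}}$ by a constant times $\N{f}_2 \N{g}_2$, giving boundedness of $T_{\mu}$.

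The main obstacle I anticipate is the step $(b) \Rightarrow (d)$, specifically the transfer of the pointwise kernel estimate of Theorem \ref{Intro1} into a comparison of the two functionals $\widetilde{\mu}$ and $\widehat{\mu}$ uniformly in $z$. The subtlety is that both the lower bound $\n{k_z(w)}^2 \gtrsim K_{\mathcal{U}}(z,z)$ on $D(z,\ru)$ and the volume comparison $\Vol{D(z,\ru)} K_{\mathcal{U}}(z,z) \asymp 1$ must hold with constants independent of $z$ — this uniformity is exactly what homogeneity (a transitive automorphism group acting by Bergman isometries) buys us, but it requires care to verify that the Jacobian factors from the change of variables $\varphi_a$ combine correctly with the reproducing property. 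Once this uniform two-sided comparison is in hand, the remaining implications are either trivial or direct citations of the preceding results, so I expect the bulk of the genuine work to lie in this single estimate and in confirming that Theorem \ref{thmCarleson} delivers $(d) \Leftrightarrow (c)$ in the stated form for all $p \geq 1$.
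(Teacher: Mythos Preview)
Your cycle $(a)\Rightarrow(b)\Rightarrow(d)\Leftrightarrow(c)\Rightarrow(a)$ matches the paper's, and the first three links are argued identically. The worry you flag about $(b)\Rightarrow(d)$ is misplaced: the uniform comparison $\Vol{B(z,\ru)}\,K_{\mathcal U}(z,z)\asymp 1$ that you single out as the main obstacle is exactly the content of Lemma~\ref{Lemma1}, already in hand; with it, the step collapses to the one-line estimate $\widehat\mu(z)\le M_\ru\,\widetilde\mu(z)$, just as the paper does.

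The genuine divergence is in $(c)\Rightarrow(a)$. The paper does \emph{not} use your duality argument: it bounds $\N{T_\mu f}_2$ directly, first using that $\mu$ is Carleson for $L^1_a$ to replace $d\mu$ by $dV$ in the inner integral, and then invoking the boundedness of the positive Bergman operator $P^+_{\mathcal U}$ on $L^2(\mathcal U,dV)$ (Lemma~\ref{PosBergBddness}), transferred from B\'ekoll\'e--Kagou's theorem on Siegel domains via a biholomorphism. Your route is more elementary and bypasses that detour, but as phrased it has a small circularity: you invoke $\inner{T_\mu f}{g}=\int_{\mathcal U} f\bar g\,d\mu$ before knowing that $T_\mu f\in L^2_a$, and justifying this identity by exchanging integrals would need precisely the absolute-integrability bound the paper extracts from $P^+_{\mathcal U}$. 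The clean way to execute your idea is to run it backwards: the Carleson condition makes $i_2$ bounded, hence $i_2^*\,i_2$ is a bounded operator on $L^2_a$, and evaluating $(i_2^*\,i_2 f)(z)=\inner{i_2 f}{i_2 K_{\mathcal U}(\cdot,z)}_{L^2(d\mu)}=\int_{\mathcal U} K_{\mathcal U}(z,w)f(w)\,d\mu(w)$ via the reproducing property identifies it with $T_\mu$, no Fubini required. What your approach buys is independence from the Siegel-domain machinery; what the paper's buys is an explicit absolute-integrability estimate that it later reuses to justify the integral interchanges in the compactness theorem.
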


The representative domain of the tube domain over the Vinberg's cone is an example of nonsymmetric minimal bounded homogeneous domain. 
Theorem \ref{Bddness*} generalizes Zhu's result (\cite[Theorem A]{Zhu1}) to such domain, for instance. 

In the part $(c) \Longrightarrow (a)$, 
we use the boundedness of the positive Bergman operator $P_{\mathcal{U}}^+$ 
on $L^2(\mathcal{U},dV)$. 
Using Schur's theorem (see \cite[Theorem 3.6]{Zhu3}), it is sufficient to find 
a positive function $h$ and a positive constant $C$ such that
\begin{eqnarray*}  
\int_{\mathcal{U}} \n{K_{\mathcal{U}}(z,w)} h(w) \, dV(w) \leq C h(z) 
\end{eqnarray*}
holds for all $z \in \mathcal{U}$. 
If $\mathcal{U}$ is a bounded symmetric domain in its Harish-Chandra realization, we can construct such $h$ and $C$ from the Forelli-Rudin inequalities 
(see \cite[Theorem 7.5]{Zhu3}, \cite[Proposition 8]{Eng}). 
But it is difficult to do this on minimal bounded homogeneous domains. 
Instead, we make use of the boundedness of the positive Bergman operator $P_{\mathcal{D}}^+$ on $L^2(\mathcal{D},dV)$, 
where $\mathcal{D}$ is a homogeneous Siegel domain of type II (\cite[Theorem II.7]{B-K}). 
Since every bounded homogeneous domain is biholomorphic to some Siegel domain, we deduce the boundedness of $P_{\mathcal{U}}^+$ (see section 2.4). 

To prove the compactness of $T_{\mu}$, we consider a vanishing Carleson measure for $L^2_a(\mathcal{U})$. 
We know that $K_{\mathcal{U}}(a,a) \rightarrow \infty$ as $a \rightarrow \partial \mathcal{U}$ (see \cite[Proposition 5.2]{Koba}). 
Therefore, we can prove Theorem $\ref{claim:7.2}$ in the same way as in \cite[Theorem 7.7]{Zhu3}. 
We obtain the condition of the compactness of the Toeplitz operator.
\begin{thm} \label{Intro2}
Let $\mathcal{U} \subset \C^n$ be a minimal bounded homogeneous domain 
and $\mu$ a finite positive Borel measure on $\mathcal{U}$. 
Then the following conditions are all equivalent.\\
$(a)$ \ $T_{\mu}$ is a compact operator on $L^2_a(\mathcal{U})$.\\
$(b)$ \ The Berezin transform $\widetilde{\mu}(z)$ tends to $ 0$ as $z \rightarrow \partial \mathcal{U}$.\\ 
$(c)$ \ $\mu$ is a vanishing Carleson measure for $L^2_a(\mathcal{U})$. \\
$(d)$ \ The averaging function $\widehat{\mu}(z)$ tends to $0$ as $z \rightarrow \partial \mathcal{U}$. 
\end{thm}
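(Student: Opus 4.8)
The plan is to establish the cycle $(a) \Rightarrow (b) \Rightarrow (d) \Rightarrow (c) \Rightarrow (a)$, running parallel to the boundedness result of Theorem~\ref{Bddness*} but with the boundedness of each quantity replaced by its decay as $z \to \partial\mathcal{U}$. The one genuinely new input, beyond what Theorem~\ref{Bddness*} already supplies, is the weak nullity of the normalized reproducing kernels. Writing $k_a(w) := K_{\mathcal{U}}(w,a)/\sqrt{K_{\mathcal{U}}(a,a)}$, these are unit vectors of $L^2_a(\mathcal{U})$ with $\inner{f}{k_a} = f(a)/\sqrt{K_{\mathcal{U}}(a,a)}$ for $f \in L^2_a(\mathcal{U})$. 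Since $K_{\mathcal{U}}(a,a) \to \infty$ as $a \to \partial\mathcal{U}$ by \cite[Proposition~5.2]{Koba}, a normal-family argument (using the transformation law for $k_a$ under the automorphism $\varphi_a$) gives that $k_{a_n} \to 0$ weakly in $L^2_a(\mathcal{U})$ for every sequence $a_n \to \partial\mathcal{U}$; this is the feature that upgrades the boundedness estimates to boundary-decay estimates throughout.

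For $(a) \Rightarrow (b)$ I would use the identity $\widetilde{\mu}(a) = \inner{T_\mu k_a}{k_a}$, valid by Fubini's theorem and the reproducing property. A compact operator carries weakly null sequences to norm-null ones, so for every sequence $a_n \to \partial\mathcal{U}$ we have $\widetilde{\mu}(a_n) = \inner{T_\mu k_{a_n}}{k_{a_n}} \to 0$, that is $\widetilde{\mu}(a) \to 0$ as $a \to \partial\mathcal{U}$. For $(b) \Rightarrow (d)$ I would invoke the pointwise domination $\widehat{\mu} \le C\,\widetilde{\mu}$ furnished by Theorem~\ref{Intro1}: on a Bergman ball $D(a,\ru)$ the kernel bound gives $\n{k_a(w)}^2 \ge C_{\ru}^{-2} K_{\mathcal{U}}(a,a)$, while the product $\Vol{D(a,\ru)}\,K_{\mathcal{U}}(a,a)$ is bounded above and below by positive constants depending only on $\ru$; integrating $\n{k_a}^2$ over $D(a,\ru)$ then yields $\widehat{\mu}(a) \le C\,\widetilde{\mu}(a)$, so the decay of $\widetilde{\mu}$ forces that of $\widehat{\mu}$.

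The step $(d) \Rightarrow (c)$ is where I expect the real work to sit. Fixing a lattice $\{a_j\}$ that is $\ru$-separated in the Bergman metric and whose balls $D(a_j,\ru)$ cover $\mathcal{U}$ with bounded overlap, the Carleson-type inequality of Theorem~\ref{thmCarleson} reduces the testing of $\mu$ on $L^2_a(\mathcal{U})$ to controlling weighted sums built from the values $\widehat{\mu}(a_j)$. Splitting $\mathcal{U}$ into a compact core $E$ and a boundary collar $\mathcal{U}\setminus E$ on which $\widehat{\mu}$ is uniformly small, I would bound the inclusion $L^2_a(\mathcal{U}) \hookrightarrow L^2(\mu)$ along weakly null sequences: the collar contributes a term controlled by $\sup_{\mathcal{U}\setminus E}\widehat{\mu}$, which can be made arbitrarily small, while the core contributes a term that vanishes because a weakly null sequence tends to $0$ uniformly on compacta. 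This yields the vanishing Carleson property and is precisely the content of Theorem~\ref{claim:7.2}, proved as in \cite[Theorem~7.7]{Zhu3}.

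Finally, for $(c) \Rightarrow (a)$ I would use the factorization $T_\mu = I_\mu^* I_\mu$, where $I_\mu : L^2_a(\mathcal{U}) \to L^2(\mu)$ is the inclusion; the reproducing property identifies $I_\mu^*$ with the map $h \mapsto \int_{\mathcal{U}} K_{\mathcal{U}}(\cdot,w)\,h(w)\,d\mu(w)$, whence $I_\mu^* I_\mu = T_\mu$. Condition $(c)$ says exactly that $I_\mu$ is compact, and $T_\mu$ is already bounded by Theorem~\ref{Bddness*} (a vanishing Carleson measure being in particular Carleson), so $T_\mu = I_\mu^* I_\mu$ is compact. Equivalently, one may approximate $T_\mu$ in operator norm by the compact operators $T_{\mu_n}$ with $\mu_n := \mu|_{E_n}$ for a compact exhaustion $E_n \uparrow \mathcal{U}$, the error $\N{T_{\mu - \mu_n}}$ being dominated by the shrinking Carleson constant of the boundary tail together with the boundedness of the positive Bergman operator $P_{\mathcal{U}}^+$. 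The principal obstacle is the geometric step $(d) \Rightarrow (c)$: lacking symmetry, the separated-lattice construction and the bounded-overlap covering must be extracted from the uniform kernel bounds of Theorem~\ref{Intro1} and the estimate of Lemma~\ref{Lemma1}, rather than from explicit group-theoretic formulas as in the symmetric case.
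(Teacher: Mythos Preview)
Your proposal is correct and follows essentially the same route as the paper: the cycle $(a)\Rightarrow(b)\Rightarrow(d)\Leftrightarrow(c)\Rightarrow(a)$, with $(a)\Rightarrow(b)$ via weak nullity of $k_a$ and compactness, $(b)\Rightarrow(d)$ via the pointwise bound $\widehat\mu\le M_\rho\,\widetilde\mu$ from Lemma~\ref{Lemma1}, and $(d)\Leftrightarrow(c)$ via Theorem~\ref{claim:7.2}. The only cosmetic difference is in $(c)\Rightarrow(a)$: you phrase it as the factorization $T_\mu=I_\mu^{*}I_\mu$ with $I_\mu$ compact, whereas the paper instead derives the inequality $\N{T_\mu f}_2\le M_\mu\N{f}_{L^2(d\mu)}$ from the identity $\inner{T_\mu f}{g}=\int f\bar g\,d\mu$ and then tests on weakly null sequences---but these are two packagings of the same computation.
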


\section{Preliminaries}
\subsection{Minimal domain}
Let $D$ be a bounded domain in $\C^{n}$. 
We say that $D$ is a minimal domain with a center $t \in D$ if the following condition is satisfied: 
for every biholomorphism $\psi : D \longrightarrow D^{\prime}$ with $\det J(\psi , t)=1$, we have 
\begin{eqnarray*}
{\rm Vol} \, (D^{\prime}) \geq {\rm Vol} \, (D)  .
\end{eqnarray*}
From \cite[Proposition 3.6]{I-K} or \cite[Theorem 3.1]{MM}, 
we see that $D$ is a minimal domain with a center $t$ if and only if 
$$K_{D}(z,t) = \frac{1}{{\rm Vol \, }(D)}$$
for any $z \in D$. 

The representative bounded homogeneous domain is a generalization of the Harish-Chandra 
realization for a bounded symmetric domain. 
Indeed, every bounded homogeneous domain is biholomorphic to a representative bounded homogeneous domain. 
It is known that any representative bounded homogeneous domain is a minimal domain with a center $0$ (see \cite[Proposition 3.8]{I-K}). 
Therefore, every bounded homogeneous domain is biholomorphic to a minimal bounded homogeneous domain. 

\subsection{Berezin symbol}
We fix a minimal bounded homogeneous domain $\mathcal{U}$ with a center $\cent$. 
For a bounded linear operator $T$ on $L^2_a(\mathcal{U})$, the Berezin symbol $\widetilde{T}$ of $T$ is defined by 
\begin{eqnarray*}
 \widetilde{T}(z) := \inner{T k_z}{k_z}  \ \ (z \in \mathcal{U})  . 
\end{eqnarray*}
For a Borel measure $\mu$ on $\mathcal{U}$, we define a function $\widetilde{\mu}$ on $\mathcal{U}$ by
\begin{eqnarray*}
 \widetilde{\mu}(z) := \int_{\mathcal{U}} \vert k_z(w) \vert^2 \, d\mu(w) ,
\end{eqnarray*}
which is called the Berezin symbol of the measure $\mu$. 
Since $\n{K_{\mathcal{U}}(z,w)}$ is a bounded function on $B(\cent,\ru) \times \mathcal{U}$ (see \cite[Proposition 6.1]{I-Y}), 
$\widetilde{\mu}$ is a continuous function if $\mu$ is finite. 

Suppose that the Toeplitz operator $T_{\mu}$ is a bounded operator on $L^2_a(\mathcal{U})$. We have 
\begin{eqnarray*}
 \widetilde{T_{\mu}}(z) =   \inner{T_{\mu}k_z}{k_z} 
                        = \frac{1}{K_{\mathcal{U}}(z,z)^{1/2}} T_{\mu}k_z (z) 
\end{eqnarray*}
by the definition of the reproducing kernel. The right hand side equals
\begin{eqnarray*}
  \frac{1}{K(z,z)^{1/2}} \int_{\mathcal{U}} K_{\mathcal{U}}(z,w) k_z (w) \, d\mu(w) 
                    = \int_{\mathcal{U}} \vert k_z(w) \vert^2 \, d\mu(w) .
\end{eqnarray*}
Therefore, we have 
\begin{eqnarray}
\widetilde{T_{\mu}}(z) =  \widetilde{\mu}(z) .   \label{tilde}
\end{eqnarray}

\subsection{Carleson measure and vanishing Carleson measure}
Let $\mu$ be a positive Borel measure on $\mathcal{U}$ and $p \geq 1$. 
We say that $\mu$ is a Carleson measure for $L^p_a(\mathcal{U})$ if 
there exists a constant $M>0$ such that 
\begin{eqnarray*}
 \int_{\mathcal{U}} \vert f(z) \vert^p \, d\mu (z) \leq M \int_{\mathcal{U}} \vert f(z) \vert^p \, dV(z) 
\end{eqnarray*}
for all $f \in L^p_a(\mathcal{U})$. It is easy to see that $\mu$ is a Carleson measure for $L^p_a(\mathcal{U})$ if and only if $L^p_a(\mathcal{U}) \subset L^p_a(\mathcal{U}, d\mu)$ and the inclusion map 
$$  i_p : L^p_a(\mathcal{U}) \longrightarrow  L^p_a(\mathcal{U}, d\mu)$$
is bounded. 

Suppose $\mu$ is a Carleson measure for $L^2_a(\mathcal{U})$. 
We say that $\mu$ is a vanishing Carleson measure for $L^2_a(\mathcal{U})$ if the inclusion map 
$$  i_2 : L^2_a(\mathcal{U}) \longrightarrow  L^2_a(\mathcal{U}, d\mu)$$
is compact.

\subsection{Boundedness of the positive Bergman operator}
In order to prove the part $(c) \Longrightarrow (a)$ in Theorem $\ref{Bddness*}$, we use the boundedness of the positive Bergman operator $P_{\mathcal{U}}^+$ on $L^2(\mathcal{U},dV)$ defined by 
\begin{eqnarray}
P_{\mathcal{U}}^+ g(z) := \int_{\mathcal{U}} \n{K_{\mathcal{U}}(z,w)} g(w) \, dV(w)  \label{positive Bergman operator}
\end{eqnarray}
for $g \in L^2(\mathcal{U},dV)$. 
We prove that $P_{\mathcal{U}}^+$ is a bounded operator on $L^2(\mathcal{U},dV)$.

It is known that every bounded homogeneous domain is holomorphically equivalent to a homogeneous Siegel domain \cite{VGS}. 
Let $\Phi$ be a biholomorphic map from $\mathcal{U}$ to a Siegel domain $\mathcal{D}$. We define a unitary map $U_{\Phi}$ 
from $L^2(\mathcal{U},dV)$ to $L^2(\mathcal{D},dV)$ by 
$$ U_{\Phi} f(\zeta)  := f(\Phi^{-1}(\zeta)) \n{\det J(\Phi^{-1},\zeta)} . $$
Then, we have
$$    U_{\Phi} \circ P_{\mathcal{U}}^+ = P_{\mathcal{D}}^+ \circ U_{\Phi}  \ \ \ (f \in L^2(\mathcal{U},dV)). $$
Therefore, the boundedness of $P_{\mathcal{U}}^+$ on $L^2(\mathcal{U},dV)$ is equivalent to the boundedness of $P_{\mathcal{D}}^+$ on $L^2(\mathcal{D},dV)$. 
On the other hand, B{\'e}koll{\'e}-Kagou proved the boundedness of the positive Bergman operator $P_{\mathcal{D}}^+$ on $L^2(\mathcal{D},dV)$
 (\cite[Theorem II.7]{B-K}). 
Therefore, we have the following lemma. 

\begin{lem} \label{PosBergBddness} 
The operator $P_{\mathcal{U}}^+$ is bounded on $L^2(\mathcal{U},dV)$. 
\end{lem}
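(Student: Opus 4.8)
The plan is to reduce the boundedness of $P_{\mathcal{U}}^+$ on $L^2(\mathcal{U},dV)$ to the known boundedness of $P_{\mathcal{D}}^+$ on $L^2(\mathcal{D},dV)$, where $\mathcal{D}$ is a homogeneous Siegel domain biholomorphically equivalent to $\mathcal{U}$, via the explicit unitary intertwiner $U_{\Phi}$ introduced just above the statement. The two facts I may freely invoke are that every bounded homogeneous domain is holomorphically equivalent to such a $\mathcal{D}$ (so the map $\Phi$ exists) and that B\'ekoll\'e--Kagou have established the boundedness of $P_{\mathcal{D}}^+$. Hence the entire argument is the verification of the intertwining identity $U_{\Phi}\circ P_{\mathcal{U}}^+ = P_{\mathcal{D}}^+ \circ U_{\Phi}$ together with the observation that $U_{\Phi}$ is unitary; boundedness then transfers immediately.

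First I would record the unitarity of $U_{\Phi}$. This is the standard change-of-variables for the $L^2$ norm under a biholomorphism: writing $z=\Phi^{-1}(\zeta)$, the real Jacobian of the map $\zeta\mapsto\Phi^{-1}(\zeta)$ equals $\n{\det J(\Phi^{-1},\zeta)}^2$, so that
\begin{eqnarray*}
\int_{\mathcal{D}} \n{U_{\Phi} f(\zeta)}^2 \, dV(\zeta)
 = \int_{\mathcal{D}} \n{f(\Phi^{-1}(\zeta))}^2 \n{\det J(\Phi^{-1},\zeta)}^2 \, dV(\zeta)
 = \int_{\mathcal{U}} \n{f(z)}^2 \, dV(z),
\end{eqnarray*}
which shows $U_{\Phi}$ is an isometry, and since $\Phi$ is a biholomorphism the same formula for $\Phi$ shows $U_{\Phi}$ is onto with inverse $U_{\Phi^{-1}}$. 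Second, I would establish the transformation rule for the Bergman kernel under $\Phi$, namely
\begin{eqnarray*}
K_{\mathcal{U}}(z,w) = K_{\mathcal{D}}(\Phi(z),\Phi(w)) \, \det J(\Phi,z) \, \overline{\det J(\Phi,w)},
\end{eqnarray*}
which follows from the uniqueness of the reproducing kernel. Taking absolute values and substituting $\det J(\Phi,\Phi^{-1}(\zeta)) = 1/\det J(\Phi^{-1},\zeta)$ gives a clean relation between $\n{K_{\mathcal{U}}}$ and $\n{K_{\mathcal{D}}}$ with the Jacobian weights.

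Third, I would insert these into a direct computation of $U_{\Phi}\bigl(P_{\mathcal{U}}^+ g\bigr)$. Starting from the definition in \eqref{positive Bergman operator}, I would apply $U_{\Phi}$, then change variables $w=\Phi^{-1}(\eta)$ in the integral over $\mathcal{U}$, using $dV(w) = \n{\det J(\Phi^{-1},\eta)}^2\,dV(\eta)$ and the kernel transformation rule above. The Jacobian factors produced by the change of variables, by the kernel identity, and by the definition of $U_{\Phi}$ should combine so that exactly the weight $\n{\det J(\Phi^{-1},\eta)}$ survives inside the integral against $g(\Phi^{-1}(\eta)) = U_{\Phi^{-1}}$-type data, reassembling into $\int_{\mathcal{D}} \n{K_{\mathcal{D}}(\zeta,\eta)} (U_{\Phi}g)(\eta)\,dV(\eta) = P_{\mathcal{D}}^+(U_{\Phi}g)(\zeta)$. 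This bookkeeping of Jacobian factors is the only place requiring care, and it is where I expect the main (albeit routine) obstacle to lie: one must track the holomorphic Jacobian of the kernel transformation against the real Jacobian of the volume change and confirm that the absolute values align correctly so that no stray weight remains. Once the identity $U_{\Phi}\circ P_{\mathcal{U}}^+ = P_{\mathcal{D}}^+ \circ U_{\Phi}$ is confirmed, unitarity of $U_{\Phi}$ gives $P_{\mathcal{U}}^+ = U_{\Phi}^{-1} P_{\mathcal{D}}^+ U_{\Phi}$, so $\N{P_{\mathcal{U}}^+} = \N{P_{\mathcal{D}}^+} < \infty$ by the B\'ekoll\'e--Kagou theorem, completing the proof.
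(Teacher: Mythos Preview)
Your proposal is correct and follows exactly the approach of the paper: transfer the problem to a homogeneous Siegel domain via the unitary $U_{\Phi}$, verify the intertwining relation $U_{\Phi}\circ P_{\mathcal{U}}^{+}=P_{\mathcal{D}}^{+}\circ U_{\Phi}$, and invoke B\'ekoll\'e--Kagou. The only difference is that you spell out the Jacobian bookkeeping for unitarity and the intertwining identity, whereas the paper simply asserts these facts and states the lemma as an immediate consequence.
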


\section{Some Lemmas}
In this section, we show some lemmas for a minimal bounded homogeneous domain $\mathcal{U}$ with a center $\cent \in \mathcal{U}$. 
Although the proofs of these lemmas are almost same as the ones for the case of symmetric domain (\cite{BCZ},\cite{ZhuBMO},\cite{Zhu3}), 
we write them here for the sake of completeness. 
In this section, $K(z,w)$ means $K_{\mathcal{U}}(z,w)$. 
First, we present the following theorem, which plays fundamental roles in this work. 
\begin{thm}[\mbox{\cite[Theorem A]{I-Y}}] \label{Ishi} 
For any $\ru>0$, there exists $C_{\ru}>0$ such that 
$$  C_{\ru}^{-1} \leq \n{\frac{K(z,a)}{K(a,a)} }\leq C_{\ru} $$
for all $z,a \in \mathcal{U}$ such that $\beta (z,a) \leq \ru$.\\
\end{thm}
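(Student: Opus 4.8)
The plan is to reduce the off-diagonal estimate to a diagonal one by exploiting homogeneity together with the defining property of the minimal realization, and then to close the diagonal estimate by a gradient bound along Bergman geodesics. First I would invoke the transformation law of the Bergman kernel: for every $\varphi \in \mathrm{Aut}(\mathcal{U})$,
\[
 K(\varphi(z),\varphi(w))\, J\varphi(z)\, \overline{J\varphi(w)} = K(z,w).
\]
By homogeneity, given $a \in \mathcal{U}$ I choose $\varphi_a \in \mathrm{Aut}(\mathcal{U})$ with $\varphi_a(a) = \cent$. Setting $w=a$ and using that $\mathcal{U}$ is minimal with center $\cent$, so that $K(\cdot,\cent) \equiv K(\cent,\cent)$, gives $K(z,a) = K(\cent,\cent)\, J\varphi_a(z)\, \overline{J\varphi_a(a)}$, while $z=w=a$ gives $K(a,a) = K(\cent,\cent)\, \n{J\varphi_a(a)}^2$. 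Dividing yields the clean identity $\n{K(z,a)/K(a,a)} = \n{J\varphi_a(z)}/\n{J\varphi_a(a)}$. Since $\n{J\varphi_a(z)}^2 = K(z,z)/K(\varphi_a(z),\varphi_a(z))$ and $\n{J\varphi_a(a)}^2 = K(a,a)/K(\cent,\cent)$, and since $\beta$ is $\mathrm{Aut}(\mathcal{U})$-invariant so that $\varphi_a(z) \in \overline{B(\cent,\ru)}$ whenever $\beta(z,a)\le\ru$, this rewrites as
\[
 \n{\frac{K(z,a)}{K(a,a)}}^2 = \frac{K(z,z)}{K(a,a)}\cdot\frac{K(\cent,\cent)}{K(\varphi_a(z),\varphi_a(z))}.
\]
The second factor is harmless: the Bergman metric of a bounded homogeneous domain is complete, so $\overline{B(\cent,\ru)}$ is a compact subset of $\mathcal{U}$, and the continuous positive function $w\mapsto K(w,w)$ is bounded above and below there by constants depending only on $\ru$. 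Thus the theorem is equivalent to the diagonal estimate $C_\ru^{-1} \le K(z,z)/K(a,a) \le C_\ru$ for $\beta(z,a)\le\ru$.

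To obtain the diagonal estimate I would integrate $\log K(w,w)$ along a Bergman geodesic $\gamma$ from $a$ to $z$, whose length is at most $\ru$, so that
\[
 \n{\log K(z,z) - \log K(a,a)} \le \int_\gamma \N{d\log K(w,w)}_{B}\, ds,
\]
the subscript $B$ denoting the pointwise Bergman-metric norm. Since the Bergman metric is $\partial\bar\partial\log K(w,w)$, the integrand equals, up to a fixed numerical constant, $\kappa(w) := \N{\partial\log K(w,w)}_{B}$, and the estimate then follows with $C_\ru = \exp\!\big(c\,\ru\,\sup_{w}\kappa(w)\big)$ as soon as $\sup_{w\in\mathcal{U}}\kappa(w) < \infty$.

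The hard part is precisely this last uniform bound $\sup_w \kappa(w) < \infty$, equivalently the uniform control of the Jacobian ratio $\n{J\varphi_a(z)/J\varphi_a(a)}$ over all $a$ and all $z$ with $\beta(z,a)\le\ru$. It does not follow from soft homogeneity alone: transporting a base-point bound by an automorphism $\varphi$ introduces the pluriharmonic anomaly $\partial\log J\varphi$ in the transformation of $\partial\log K$, which is exactly the quantity one is trying to control, so the naive argument becomes circular. (On $\mathbb{D}$ one computes $\kappa(w)^2 = \n{w}^2 < 1$ directly, confirming both that the bound holds and that $\kappa$ is not constant.) I would therefore pass to the realization of $\mathcal{U}$ as a homogeneous Siegel domain of type II, where the Bergman kernel admits an explicit expression governed by the associated normal $j$-algebra, and read off the boundedness of $\kappa$ from that formula. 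This is the analytic core of the theorem and the one place where the structure theory of homogeneous domains is genuinely required; the two reduction steps above are then purely formal.
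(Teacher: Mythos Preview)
The paper does not prove this theorem: it is quoted from the companion paper \cite{I-Y} (stated as Theorem~1.1 in the introduction and restated here as Theorem~\ref{Ishi}) and used throughout as a black box, so there is no in-paper proof to compare your proposal against.

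On the substance of your proposal: the algebraic reductions are correct, and the identity $\n{K(z,a)/K(a,a)}=\n{\det J(\varphi_a,z)}/\n{\det J(\varphi_a,a)}$ is exactly what the paper records as Lemma~\ref{HomBergKer}. However, as you yourself observe, the passage to the diagonal estimate $K(z,z)/K(a,a)$ is not a genuine simplification: that ratio differs from $\n{\det J(\varphi_a,z)/\det J(\varphi_a,a)}^2$ only by the compact-set factor $K(\cent,\cent)/K(\varphi_a(z),\varphi_a(z))$, so the two formulations are equivalent modulo something harmless, and you have repackaged the problem rather than reduced it. The entire content of the theorem therefore sits in your final paragraph, where you propose to bound $\sup_w\kappa(w)$ by passing to a Siegel realization and reading the estimate off the explicit Bergman kernel formula there --- and this step you do not carry out. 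What you have written is an accurate description of the \emph{shape} of a proof and a correct identification of where the structure theory of homogeneous domains must enter, but the substantive computation is only gestured at; that computation is precisely the business of \cite{I-Y}.
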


For $a \in \mathcal{U}$, let $\varphi_a$ be an automorphism of $\mathcal{U}$ such that $\varphi_a (a) =\cent$. 
Using Theorem \ref{Ishi}, we prove Theorem $\ref{thmCarleson}$. First, we prove some lemmas. 
\begin{lem} \label{HomBergKer} 
One has 
\begin{eqnarray}
\n{\det J (\varphi_a, z)}^2       &=&  \frac{\n{K(z,a)}^2}{K(\cent,\cent) K(a,a)} ,  \label{Jacob-01} \\
\n{\det J (\varphi_a^{-1}, z)}^2  &=&  \frac{K(\cent,\cent) K(a,a)}{\n{K(\varphi_a^{-1}(z),a)}^2},  \label{Jacob-02}
\end{eqnarray}
where $\det J (\varphi_a, z)$ is the complex Jacobian of $\varphi_a$ at $z$.
\end{lem}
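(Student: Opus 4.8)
The plan is to derive both formulas from the transformation rule for the Bergman kernel under biholomorphisms together with the minimal-domain characterization $K(z,\cent)=K(\cent,\cent)$ stated in the preliminaries. Recall the standard identity: for a biholomorphism $\varphi$ of $\mathcal{U}$ one has
\begin{eqnarray*}
K(\varphi(z),\varphi(w)) \, \det J(\varphi,z) \, \overline{\det J(\varphi,w)} = K(z,w) .
\end{eqnarray*}
I would apply this to $\varphi=\varphi_a$, which satisfies $\varphi_a(a)=\cent$.

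To obtain \eqref{Jacob-01}, the first step is to set $w=a$ in the transformation rule, so that $\varphi_a(w)=\cent$ and the left side becomes $K(\varphi_a(z),\cent)\,\det J(\varphi_a,z)\,\overline{\det J(\varphi_a,a)}=K(z,a)$. Since $\mathcal{U}$ is minimal with center $\cent$, we have $K(\varphi_a(z),\cent)=K(\cent,\cent)$, so this collapses to
\begin{eqnarray*}
K(\cent,\cent)\,\det J(\varphi_a,z)\,\overline{\det J(\varphi_a,a)} = K(z,a).
\end{eqnarray*}
The remaining step is to evaluate $\det J(\varphi_a,a)$. Setting $z=w=a$ in the transformation rule gives $K(\cent,\cent)\,\n{\det J(\varphi_a,a)}^2 = K(a,a)$, hence $\n{\det J(\varphi_a,a)}^2 = K(a,a)/K(\cent,\cent)$. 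Taking the modulus squared of the displayed relation and substituting this value yields
\begin{eqnarray*}
K(\cent,\cent)^2 \, \n{\det J(\varphi_a,z)}^2 \, \frac{K(a,a)}{K(\cent,\cent)} = \n{K(z,a)}^2 ,
\end{eqnarray*}
which rearranges to \eqref{Jacob-01}.

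For \eqref{Jacob-02}, the cleanest route is to use the chain rule $\det J(\varphi_a^{-1},z)=1/\det J(\varphi_a,\varphi_a^{-1}(z))$, so that $\n{\det J(\varphi_a^{-1},z)}^2 = 1/\n{\det J(\varphi_a,\varphi_a^{-1}(z))}^2$. Substituting the point $\varphi_a^{-1}(z)$ into the already-proved formula \eqref{Jacob-01} gives $\n{\det J(\varphi_a,\varphi_a^{-1}(z))}^2 = \n{K(\varphi_a^{-1}(z),a)}^2 / (K(\cent,\cent)K(a,a))$, and taking reciprocals immediately produces \eqref{Jacob-02}. I do not anticipate a genuine obstacle here; the only point requiring care is keeping track of which argument the Jacobian is evaluated at (in particular evaluating at $a$ versus at $z$ versus at $\varphi_a^{-1}(z)$), and ensuring the minimal-domain identity is invoked for the correct argument $\varphi_a(z)$ rather than $z$ itself.
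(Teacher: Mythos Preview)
Your proof is correct and follows essentially the same approach as the paper: apply the transformation formula for the Bergman kernel with $w=a$, use the minimal-domain identity $K(\varphi_a(z),\cent)=K(\cent,\cent)$, determine $\n{\det J(\varphi_a,a)}^2$ by specializing to $z=a$, and then obtain \eqref{Jacob-02} from \eqref{Jacob-01} via the chain rule $\det J(\varphi_a,\varphi_a^{-1}(z))\,\det J(\varphi_a^{-1},z)=1$.
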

\begin{proof}
By the transformation formula of the Bergman kernel, we have 
\begin{eqnarray*}
K(z,a) = K(\varphi_a(z),\varphi_a(a)) \, \det J (\varphi_a, z) \overline{\det J (\varphi_a, a)} .
\end{eqnarray*}
Since $K(\varphi_a(z),\varphi_a(a)) = K(\varphi_a(z),\cent) = K(\cent,\cent)$, we obtain 
\begin{eqnarray}
\n{\det J (\varphi_a, z)}^2 =  \frac{\n{K(z,a)}^2}{K(\cent,\cent)^2 \, \n{\det J (\varphi_a, a)}^2} .  \label{Jacob-1} 
\end{eqnarray}
On the other hand, we have
\begin{eqnarray*}
K(a,a) = K(\varphi_a(a),\varphi_a(a)) \, \n{\det J (\varphi_a, a)}^2  .  
\end{eqnarray*}
This means 
\begin{eqnarray}
 \n{\det J (\varphi_a, a)}^2 = \frac{K(a,a)}{K(\cent,\cent)}  .  \label{Jacob-2} 
\end{eqnarray}
From $(\ref{Jacob-1})$ and $(\ref{Jacob-2})$, we obtain $(\ref{Jacob-01})$. 
The equality $(\ref{Jacob-02})$ follows from 
$$ \det J (\varphi_a, \varphi_a^{-1} (z)) \, \det J ( \varphi_a^{-1} , z) =1 .  \eqno\qedhere $$
\end{proof}

For any $z \in \mathcal{U}$ and $\ru>0$, let
$$  B(z,\ru) := \{ w \in \mathcal{U} \mid \beta(z,w) \leq \ru \} $$
be the Bergman metric disk with center $z$ and radius $\ru$. 

\begin{lem}[cf. \mbox{\cite[Lemma 8]{BCZ}}]  \label{Lemma1} 
There exists a constant $\Kr$ such that 
$$  \Kr^{-1} \leq \n{k_a (z)}^2 \Vol{B(a,\ru)} \leq \Kr $$
for all $a \in \mathcal{U}$ and $z \in B(a,\ru)$.
\end{lem}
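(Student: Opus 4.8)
The plan is to reduce the two-sided estimate to a statement about the single product $K(a,a)\,\Vol{B(a,\ru)}$, and then to exploit the homogeneity of $\mathcal{U}$ to transport that product to the fixed center $\cent$.

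First I would recall that the normalized reproducing kernel satisfies $\n{k_a(z)}^2 = \n{K(z,a)}^2/K(a,a)$, which I rewrite as $\n{k_a(z)}^2 = K(a,a)\,\n{K(z,a)/K(a,a)}^2$. For $z \in B(a,\ru)$ we have $\beta(z,a)\leq \ru$, so Theorem \ref{Ishi} applies and yields the uniform bounds $C_{\ru}^{-2}\,K(a,a) \leq \n{k_a(z)}^2 \leq C_{\ru}^{2}\,K(a,a)$. Hence it suffices to prove that $K(a,a)\,\Vol{B(a,\ru)}$ is bounded above and below by constants depending only on $\ru$, and not on $a$.

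For the volume I would use the automorphism $\varphi_a$ with $\varphi_a(a)=\cent$. Since the Bergman distance is invariant under biholomorphisms, $\varphi_a$ maps $B(a,\ru)$ onto $B(\cent,\ru)$, so the change of variables $w=\varphi_a(z)$ in $\Vol{B(\cent,\ru)}=\int_{B(\cent,\ru)}dV(w)$ gives $\int_{B(a,\ru)}\n{\det J(\varphi_a,z)}^2\,dV(z)$. Substituting the Jacobian identity $(\ref{Jacob-01})$ of Lemma \ref{HomBergKer}, the integrand becomes $\n{K(z,a)}^2/\bigl(K(\cent,\cent)K(a,a)\bigr)=\n{k_a(z)}^2/K(\cent,\cent)$. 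Inserting the bounds from the previous paragraph inside the integral then sandwiches $\Vol{B(\cent,\ru)}$ between $C_{\ru}^{-2}K(a,a)K(\cent,\cent)^{-1}\Vol{B(a,\ru)}$ and $C_{\ru}^{2}K(a,a)K(\cent,\cent)^{-1}\Vol{B(a,\ru)}$, which rearranges to $C_{\ru}^{-2}K(\cent,\cent)\Vol{B(\cent,\ru)} \leq K(a,a)\Vol{B(a,\ru)} \leq C_{\ru}^{2}K(\cent,\cent)\Vol{B(\cent,\ru)}$. As $\cent$ and $\mathcal{U}$ are fixed, the two outer quantities are constants depending only on $\ru$.

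Combining the two estimates, $\n{k_a(z)}^2\,\Vol{B(a,\ru)}$ lies between $C_{\ru}^{-4}K(\cent,\cent)\Vol{B(\cent,\ru)}$ and $C_{\ru}^{4}K(\cent,\cent)\Vol{B(\cent,\ru)}$ for every $a\in\mathcal{U}$ and every $z\in B(a,\ru)$, so taking $\Kr$ to be the larger of $C_{\ru}^{4}K(\cent,\cent)\Vol{B(\cent,\ru)}$ and its reciprocal gives the claim. The only delicate point is uniformity in $a$, which is precisely what Theorem \ref{Ishi} (whose constant $C_{\ru}$ does not depend on $a$) and homogeneity (the automorphism $\varphi_a$ carrying the metric ball at an arbitrary $a$ onto the one at $\cent$) together supply; beyond careful bookkeeping of the Jacobian factors there is no genuine analytic obstacle.
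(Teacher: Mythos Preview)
Your proof is correct and follows essentially the same approach as the paper's: both use the invariance of the Bergman metric under $\varphi_a$, the Jacobian identity from Lemma~\ref{HomBergKer}, and Theorem~\ref{Ishi} (applied twice) to reduce everything to the fixed constants $K(\cent,\cent)$ and $\Vol{B(\cent,\ru)}$. The only cosmetic difference is that the paper pulls $B(a,\ru)$ back to $B(\cent,\ru)$ via $\varphi_a^{-1}$ and bounds the single integral $K(\cent,\cent)\int_{B(\cent,\ru)}\n{K(z,a)/K(\varphi_a^{-1}(u),a)}^{2}\,dV(u)$ directly, whereas you push forward via $\varphi_a$ and handle the two factors $\n{k_a(z)}^2$ and $K(a,a)\,\Vol{B(a,\ru)}$ separately before recombining.
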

\begin{proof}
Thanks to the invariance of the Bergman distance under biholomorphic transformations, we have 
$$ \Vol{B(a,\ru)} = \int_{B(\cent,\ru)} \n{\det J (\varphi_a^{-1} ,u)}^2 \, dV(u) . $$
By Lemma $\ref{HomBergKer}$, we obtain 
\begin{eqnarray}
 \n{k_a (z)}^2 \Vol{B(a,\ru)} 
       &=&   \frac{\n{K(z,a)}^2}{K(a,a)} \int_{B(\cent,\ru)} \frac{K(\cent,\cent) K(a,a)}{\n{K(\varphi_a^{-1}(u),a)}^2} \, dV(u) \nonumber \\
       &=& K(\cent,\cent) \int_{B(\cent,\ru)} \frac{\n{K(z,a)}^2}{\n{K(\varphi_a^{-1}(u),a)}^{2}} \, dV(u)    .   \label{Lemma1-1}
\end{eqnarray}
Since $u \in B(\cent,\ru)$ means $\beta(\cent,u) \leq \ru$, 
we have $\beta(a,\varphi_a^{-1}(u)) \leq \ru$, so that Theorem $\ref{Ishi}$ implies 
\begin{eqnarray}
C_{\ru}^{-1} \leq  \n{ \frac{K(a,a)}{K(\varphi_a^{-1}(u),a)}} \leq C_{\ru} . \label{Lemma1-2}
\end{eqnarray}
On the other hand, we have
\begin{eqnarray}
C_{\ru}^{-1} \leq \n{\frac{K(z,a)}{K(a,a)}} \leq C_{\ru} .  \label{Lemma1-3}
\end{eqnarray}
Multiplying  $(\ref{Lemma1-2})$ by $(\ref{Lemma1-3})$, we obtain 
\begin{eqnarray}
C_{\ru}^{-2} \leq  \frac{\n{K(z,a)}}{\n{K(\varphi_a^{-1}(u),a)}} \leq C_{\ru}^2 . \label{Lemma1-4}
\end{eqnarray}
By $(\ref{Lemma1-1})$ and $(\ref{Lemma1-4})$, we complete the proof with $\Kr = C_{\ru}^2 K(\cent,\cent) {\rm Vol \, }(B(\cent,\ru))$. 
\end{proof}

Since one uses not the symmetry but the homogeneity of a complex domain 
in the proof of \cite[Lemma 5]{ZhuBMO}, the following lemma holds for the minimal bounded homogeneous domain $\mathcal{U}$. 

\begin{lem}[\mbox{\cite[Lemma 5]{ZhuBMO}}]  \label{Lemma4}
There exists a sequence $\{ w_j \} \subset \mathcal{U}$ satisfying the following conditions. \\
$(S1) \ \mathcal{U} = \cup^{\infty}_{j=1} B(w_j,\ru). $\\
$(S2) \ B(w_i,\ru/4) \cap B(w_j,\ru/4) = \emptyset.$\\
$(S3)$ There exists a positive integer $N$ such that each point $z \in \mathcal{U}$ belongs to at most $N$ of the sets $B(w_j,2\ru)$.
\end{lem}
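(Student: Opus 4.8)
The plan is to construct the sequence $\{w_j\}$ by a standard greedy maximal-separation argument using the Bergman metric, and then verify the three covering and bounded-overlap properties. First I would invoke the fact that the Bergman distance $\beta$ is a genuine metric on $\mathcal{U}$ that is complete (since $\mathcal{U}$ is homogeneous, hence its Bergman metric is complete). Choose a maximal subset $\{w_j\}$ of $\mathcal{U}$ with the property that the balls $B(w_i, \ru/4)$ are pairwise disjoint; equivalently, $\beta(w_i, w_j) > \ru/2$ for $i \neq j$. Such a maximal set exists by Zorn's lemma, and it is necessarily countable because each $B(w_j, \ru/8)$ has positive volume (again by homogeneity and Lemma \ref{HomBergKer}, the Bergman balls of fixed radius all have the same volume) while $\mathcal{U}$ has finite Lebesgue measure, so only countably many can be packed disjointly. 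This immediately gives $(S2)$.

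Next I would verify the covering property $(S1)$. By maximality of the separated set, any point $z \in \mathcal{U}$ must satisfy $\beta(z, w_j) \leq \ru/2 < \ru$ for some $j$ — otherwise $z$ could be adjoined to the set with $B(z,\ru/4)$ still disjoint from all the $B(w_j,\ru/4)$, contradicting maximality. Hence $z \in B(w_j,\ru)$, proving $\mathcal{U} = \bigcup_j B(w_j,\ru)$.

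For the bounded-overlap property $(S3)$, the key geometric input is that Bergman balls of a fixed radius all have the \emph{same} volume, because $\mathcal{U}$ is homogeneous and $\beta$ is invariant under automorphisms. Suppose $z$ lies in $B(w_{j_1},2\ru), \dots, B(w_{j_m},2\ru)$. Then each center $w_{j_k}$ satisfies $\beta(w_{j_k}, z) \leq 2\ru$, so the disjoint small balls $B(w_{j_k},\ru/4)$ are all contained in $B(z, 2\ru + \ru/4)$ by the triangle inequality. Comparing volumes gives
\begin{eqnarray*}
m \cdot \Vol{B(\cent,\ru/4)} \leq \Vol{B(\cent, 2\ru + \ru/4)},
\end{eqnarray*}
where I have used that $\Vol{B(w,r)}$ is independent of $w$. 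This bounds $m$ by the fixed ratio $N := \lfloor \Vol{B(\cent, 9\ru/4)} / \Vol{B(\cent,\ru/4)} \rfloor + 1$, giving $(S3)$.

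The main obstacle — though it is already resolved by the homogeneity of $\mathcal{U}$ — is the volume-comparison step in $(S3)$; on a general domain the ratio of Bergman-ball volumes at different radii and centers need not be uniformly controlled, but homogeneity makes all balls of equal radius congruent in volume, so the packing argument closes cleanly. I would remark that this is precisely why the proof of \cite[Lemma 5]{ZhuBMO} carries over verbatim: it never uses symmetry, only the invariance of $\beta$ and the constancy of Bergman-ball volumes under the transitive automorphism group.
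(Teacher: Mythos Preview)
The paper gives no proof of its own here: it simply notes that the argument in \cite[Lemma~5]{ZhuBMO} uses only the homogeneity of the domain, not symmetry, and therefore applies to $\mathcal{U}$. Your maximal-separation construction is exactly that standard argument, and your verification of $(S1)$ and $(S2)$ is correct.

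Your proof of $(S3)$, however, rests on a false claim. You assert that ``Bergman balls of a fixed radius all have the same volume, because $\mathcal{U}$ is homogeneous and $\beta$ is invariant under automorphisms.'' This is wrong for the Lebesgue volume $\Vol{\,\cdot\,}$ used throughout the paper: an automorphism $\varphi_a$ carries $B(a,r)$ onto $B(\cent,r)$, but by Lemma~\ref{HomBergKer} its real Jacobian is $\n{K(\cdot,a)}^2/(K(\cent,\cent)K(a,a))$, not $1$. In fact Lemma~\ref{Lemma1} (with $z=a$) gives $\Vol{B(a,\ru)}\asymp K(a,a)^{-1}$, which tends to $0$ as $a\to\partial\mathcal{U}$. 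Hence your packing inequality $m\cdot\Vol{B(\cent,\ru/4)}\leq\Vol{B(\cent,9\ru/4)}$ does not follow from the disjoint inclusion $\bigcup_k B(w_{j_k},\ru/4)\subset B(z,9\ru/4)$. The repair is to run the packing argument with the biholomorphically invariant measure $d\lambda(w):=K(w,w)\,dV(w)$ in place of $dV$: automorphisms preserve both $\beta$ and $d\lambda$, so $\lambda(B(a,r))=\lambda(B(\cent,r))$ for every $a$, and then the bound $m\leq \lambda(B(\cent,9\ru/4))/\lambda(B(\cent,\ru/4))$ is legitimate. This is what the proof in \cite{ZhuBMO} actually does, and it is why only homogeneity is needed.
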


\begin{lem}[cf. \mbox{\cite[Lemma 7]{ZhuBMO}} ] \label{Lemma5}
There exists a constant $C$ such that 
\begin{eqnarray}
  \n{f(a)}^p \leq \frac{C}{\Vol{B(a,\ru)}} \int_{B(a,\ru)} \n{f(z)}^p \, dV(z)  \label{Lemma5.1}
\end{eqnarray}
for all $f \in \mathcal{O} (\mathcal{U}), \, p \geq 1$ and $a \in \mathcal{U}$.
\end{lem}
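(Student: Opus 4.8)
The plan is to use homogeneity to transport the estimate to the center $\cent$, where the Bergman ball $B(\cent,\ru)$ is a \emph{fixed} set, and then to invoke the sub-mean value property of plurisubharmonic functions. The point is that $B(a,\ru)$ need not be a Euclidean ball and its Euclidean shape degenerates as $a \to \partial\mathcal{U}$, so a naive sub-mean value estimate directly on $B(a,\ru)$ cannot be uniform in $a$; pulling back by $\varphi_a$ and absorbing the resulting distortion into the Jacobian is what will make the constant $C$ uniform. This is the only genuinely delicate issue, and it is handled by Theorem \ref{Ishi} and Lemma \ref{Lemma1}.

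First I would fix, once and for all, a Euclidean ball $E = E(\cent,r_0)$ with $r_0=r_0(\ru)>0$ whose closure is contained in $B(\cent,\ru)$; such a ball exists because the Bergman metric induces the usual topology on $\mathcal{U}$, so $B(\cent,\ru)$ contains a Euclidean neighborhood of $\cent$. For $a \in \mathcal{U}$ put $g := f \circ \varphi_a^{-1} \in \mathcal{O}(\mathcal{U})$, so that $g(\cent)=f(a)$. Since $p \geq 1$, the function $t \mapsto t^p$ is convex and increasing on $[0,\infty)$, whence $\n{g}^p$ is plurisubharmonic, in particular subharmonic, and the sub-mean value inequality over $E$ gives
\begin{eqnarray*}
\n{f(a)}^p = \n{g(\cent)}^p \leq \frac{1}{\Vol{E}} \int_E \n{g(u)}^p \, dV(u) \leq \frac{1}{\Vol{E}} \int_{B(\cent,\ru)} \n{g(u)}^p \, dV(u) .
\end{eqnarray*}

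Next I would undo the transport. Substituting $u = \varphi_a(z)$, so that $dV(u) = \n{\det J(\varphi_a,z)}^2 \, dV(z)$ and, by invariance of the Bergman distance, $z$ ranges over $B(a,\ru) = \varphi_a^{-1}(B(\cent,\ru))$, the last integral becomes $\int_{B(a,\ru)} \n{f(z)}^p \, \n{\det J(\varphi_a,z)}^2 \, dV(z)$. To control the Jacobian uniformly I would combine $(\ref{Jacob-01})$ with Theorem \ref{Ishi}, which for $z \in B(a,\ru)$ yields $\n{K(z,a)} \leq C_{\ru} K(a,a)$ and hence
\begin{eqnarray*}
\n{\det J(\varphi_a,z)}^2 = \frac{\n{K(z,a)}^2}{K(\cent,\cent) K(a,a)} \leq \frac{C_{\ru}^2 K(a,a)}{K(\cent,\cent)} .
\end{eqnarray*}

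Finally I would convert the factor $K(a,a)$ into the reciprocal volume appearing in the statement. Evaluating Lemma \ref{Lemma1} at $z=a$, where $\n{k_a(a)}^2 = K(a,a)$, gives $K(a,a)\,\Vol{B(a,\ru)} \leq \Kr$. Chaining the three displayed estimates then produces
\begin{eqnarray*}
\n{f(a)}^p \leq \frac{C_{\ru}^2}{\Vol{E}\, K(\cent,\cent)}\, K(a,a) \int_{B(a,\ru)} \n{f(z)}^p \, dV(z) \leq \frac{C}{\Vol{B(a,\ru)}} \int_{B(a,\ru)} \n{f(z)}^p \, dV(z)
\end{eqnarray*}
with $C = C_{\ru}^2 \Kr / (\Vol{E}\, K(\cent,\cent))$, which is independent of $a$, of $f$, and of $p$ (only $E$ and $C_\ru$ depend on the fixed radius $\ru$). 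This completes the argument modulo the routine verifications of the plurisubharmonicity of $\n{g}^p$ and of the change-of-variables identity.
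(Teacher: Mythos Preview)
Your proof is correct and follows essentially the same route as the paper's: transport to the center by $\varphi_a$, apply a sub-mean value estimate on a fixed Euclidean ball $E \subset B(\cent,\ru)$, change variables back, and control the Jacobian via Lemma~\ref{HomBergKer}, Theorem~\ref{Ishi}, and Lemma~\ref{Lemma1}. The only cosmetic differences are that the paper obtains the sub-mean value inequality for $\n{g}^p$ from the holomorphic mean value property plus H\"older rather than from plurisubharmonicity, and that you read off $K(a,a)\,\Vol{B(a,\ru)} \leq \Kr$ directly from Lemma~\ref{Lemma1} at $z=a$, which is slightly cleaner than the paper's detour through $(\ref{Meanvalue5})$ and yields the constant $C_\ru^2 \Kr/(\Vol{E}\,K(\cent,\cent))$ in place of the paper's $C_\ru^4 C_R \Kr K(\cent,\cent)^{-1}$.
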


\begin{proof}
First, we consider the case $a=\cent$. 
Since the Bergman metric induces the usual Euclidean topology on $\mathcal{U}$, 
there exists a Euclidean ball $E(\cent,R)$ with center $\cent$ and the radius $R$ such that $E(\cent,R) \subset B(\cent,\ru)$. 
Let $f$ be a holomorphic function on $\mathcal{U}$. 
Since $f$ has a mean value property, we have 
\begin{eqnarray*}
f(\cent) = \frac{1}{\Vol{E(\cent,R)}} \int_{E(\cent,R)} f(z) \, dV(z) .
\end{eqnarray*}
Therefore, we have 
\begin{eqnarray}
\n{f(\cent)}^p &\leq& \left( \frac{1}{\Vol{E(\cent,R)}} \int_{E(\cent,R)} \n{f(z)} \, dV(z) \right)^p  \nonumber \\
           &\leq& \left( \frac{1}{\Vol{E(\cent,R)}} \right)^p \left( \N{f}_{L^p(E(\cent,R))} \N{1}_{L^q(E(\cent,R))} \right)^p , \label{Meanvalue0.5}
\end{eqnarray}
where $q$ denotes the conjugate exponent of $p$. 
Since 
$$\N{1}_{L^q(E(\cent,R))}^p = \Vol{E(\cent,R)}^{\frac{p}{q}}, $$ 
the last term of $(\ref{Meanvalue0.5})$ is equal to 
\begin{eqnarray*}
      \left( \Vol{E(\cent,R)} \right)^{-p+\frac{p}{q}}  \int_{E(\cent,R)} \n{f(z)}^p \, dV(z) .
\end{eqnarray*}
Therefore, we have  
\begin{eqnarray*}
\n{f(\cent)}^p \leq \frac{1}{\Vol{E(\cent,R)}}   \int_{E(\cent,R)} \n{f(z)}^p \, dV(z) 
\end{eqnarray*}
because ${-p+\frac{p}{q}} = p(-1+\frac{1}{q}) = 1$. 

Now, put $C_R := \frac{1}{\Vol{E(\cent,R)}}$. Note that the constant $C_R$ is independent of $p$ and $f$. 
Since $E(\cent,R) \subset B(\cent,\ru)$, we have 
\begin{eqnarray}
\n{f(\cent)}^p \leq C_R  \int_{B(\cent,\ru)} \n{f(z)}^p \, dV(z) .  \label{Meanvalue1}
\end{eqnarray}
Next, we prove the general case. 
Since $f \circ \varphi_a^{-1}$ is a holomorphic function on $\mathcal{U}$, we have 
\begin{eqnarray}
\n{f \circ \varphi_a^{-1} (\cent)}^p \leq C_R  \int_{B(\cent,\ru)} \n{f \circ \varphi_a^{-1}(z)}^p \, dV(z)   \label{Meanvalue2}
\end{eqnarray}
by $(\ref{Meanvalue1})$. Put $w := \varphi_a^{-1}(z)$. Then the inequality $(\ref{Meanvalue2})$ means 
\begin{eqnarray*}
\n{f(a)}^p \leq C_R  \int_{B(a,\ru)} \n{f(w)}^p \, \n{\det J (\varphi_a, w)}^2 \, dV(w) . 
\end{eqnarray*}
By Lemma $\ref{HomBergKer}$, the right hand side is equal to 
\begin{eqnarray*}
C_R  \int_{B(a,\ru)} \n{f(w)}^p \frac{\n{K(w,a)}^2}{K(\cent,\cent) K(a,a)} \, dV(w) . 
\end{eqnarray*}
Therefore we have 
\begin{eqnarray}
\n{f(a)}^p \leq C_R \frac{K(a,a)}{K(\cent,\cent)} \int_{B(a,\ru)} \n{f(w)}^p \n{\frac{K(w,a)}{ K(a,a)}}^2  \, dV(w) .  \label{Meanvalue4}
\end{eqnarray}
By Theorem $\ref{Ishi}$, we have 
\begin{eqnarray}
C_{\ru}^{-2} \leq \n{\frac{K(w,a)}{ K(a,a)}}^2 \leq C_{\ru}^2  \label{Meanvalue5}
\end{eqnarray}
on $w \in B(a,\ru)$. 
Therefore we have 
\begin{eqnarray}
\n{f(a)}^p \leq C_R C_{\ru}^2 \frac{K(a,a)}{K(\cent,\cent)} \int_{B(a,\ru)} \n{f(w)}^p  \, dV(w)    \label{Meanvalue6}
\end{eqnarray}
by $(\ref{Meanvalue4})$ and $(\ref{Meanvalue5})$. 
We see from $(\ref{Meanvalue5})$ and Lemma \ref{Lemma1} that 
\begin{eqnarray*}
C_{\ru}^{-2} \leq \n{\frac{K(w,a)}{ K(a,a)}}^2 
= \frac{\n{k_a(w)}^2}{ K(a,a)} \leq \frac{\Kr}{\Vol{B(a,\ru)} \, K(a,a)} .
\end{eqnarray*}
Hence we obtain 
\begin{eqnarray}
 K(a,a) \leq   \frac{\Kr C_{\ru}^{2}}{\Vol{B(a,r)} } .   \label{Meanvalue7}
\end{eqnarray}
By $(\ref{Meanvalue6})$ and $(\ref{Meanvalue7})$, we have 
\begin{eqnarray*}
\n{f(a)}^p \leq   \frac{C}{\Vol{B(a,\ru)} } \int_{B(a,\ru)} \n{f(w)}^p  \, dV(w) .  
\end{eqnarray*}
with $C = C_{\ru}^4 C_R \Kr K(\cent,\cent)^{-1}$.
\end{proof}

\begin{lem} \label{PropCarl}
There exists a constant $C$ such that 
\begin{eqnarray*}
\sup_{w \in B(a,\ru)} \n{f(w)}^p 
  \leq \frac{C}{\Vol{B(a,\ru)}}  \int_{B(a,2\ru)} \n{f(z)}^p \, dV(z) 
\end{eqnarray*}
for all $f \in \mathcal{O} (\mathcal{U}), p \geq 1$ and $a \in \mathcal{U}$.
\end{lem}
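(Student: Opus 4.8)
The plan is to reduce the supremum over the small ball $B(a,\ru)$ to a pointwise application of Lemma \ref{Lemma5}, and then to control the resulting integral and volume uniformly in the center. First I would fix $w \in B(a,\ru)$ and apply Lemma \ref{Lemma5} at the point $w$ (with the same radius $\ru$), which gives
\begin{eqnarray*}
\n{f(w)}^p \leq \frac{C}{\Vol{B(w,\ru)}} \int_{B(w,\ru)} \n{f(z)}^p \, dV(z),
\end{eqnarray*}
with $C$ the constant of Lemma \ref{Lemma5}, independent of $w$, $f$ and $p$. Since $\beta(a,w) \leq \ru$, the triangle inequality for the Bergman distance yields $B(w,\ru) \subset B(a,2\ru)$, so the integral on the right is at most $\int_{B(a,2\ru)} \n{f(z)}^p \, dV(z)$. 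It then remains only to replace $\Vol{B(w,\ru)}$ in the denominator by $\Vol{B(a,\ru)}$, up to a constant depending solely on $\ru$.

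The essential step is therefore the uniform volume comparison $\Vol{B(w,\ru)} \geq c\,\Vol{B(a,\ru)}$ for all $w \in B(a,\ru)$. I would extract this from Lemma \ref{Lemma1}: specializing $z=a$ there (note $a \in B(a,\ru)$) and using $\n{k_a(a)}^2 = K(a,a)$, one gets $\Kr^{-1} \leq K(a,a)\,\Vol{B(a,\ru)} \leq \Kr$, so $\Vol{B(a,\ru)}$ is comparable to $K(a,a)^{-1}$, and likewise $\Vol{B(w,\ru)}$ is comparable to $K(w,w)^{-1}$. Thus the comparison reduces to the two-sided estimate $K(w,w) \asymp K(a,a)$ on $B(a,\ru)$. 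This in turn follows by applying Theorem \ref{Ishi} twice: with base point $a$ it gives $\n{K(w,a)} \asymp K(a,a)$, while with base point $w$ (using $\beta(a,w)=\beta(w,a) \leq \ru$ together with $\n{K(a,w)} = \n{K(w,a)}$) it gives $\n{K(w,a)} \asymp K(w,w)$; combining the two yields $K(w,w) \asymp K(a,a)$ with constants depending only on $\ru$.

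Putting these together, for every $w \in B(a,\ru)$ one obtains
\begin{eqnarray*}
\n{f(w)}^p \leq \frac{C'}{\Vol{B(a,\ru)}} \int_{B(a,2\ru)} \n{f(z)}^p \, dV(z),
\end{eqnarray*}
where $C'$ absorbs $C$ and the volume-comparison constant and is independent of $w$, $a$, $f$ and $p$. Since the right-hand side no longer depends on $w$, taking the supremum over $w \in B(a,\ru)$ finishes the argument. I expect the uniform volume comparison—equivalently, the estimate $K(w,w) \asymp K(a,a)$ on $B(a,\ru)$—to be the only nontrivial point, and it is precisely here that the fundamental bound of Theorem \ref{Ishi} enters; the remaining steps are a routine combination of Lemma \ref{Lemma5} and the triangle inequality.
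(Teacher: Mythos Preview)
Your argument is correct and follows the same overall route as the paper: apply Lemma~\ref{Lemma5} at each $w\in B(a,\ru)$, enlarge $B(w,\ru)\subset B(a,2\ru)$ by the triangle inequality, and then reduce everything to the uniform volume comparison $\Vol{B(w,\ru)}\asymp\Vol{B(a,\ru)}$, which is extracted from Lemma~\ref{Lemma1} and Theorem~\ref{Ishi}. The only difference is in how that volume comparison is obtained: the paper picks an auxiliary point $b\in B(a,\ru)\cap B(w,\ru)$ and bounds the ratio $\n{k_w(b)/k_a(b)}^2$ via several applications of Theorem~\ref{Ishi}, whereas you specialize Lemma~\ref{Lemma1} at $z=a$ (respectively $z=w$) to get $\Vol{B(a,\ru)}\asymp K(a,a)^{-1}$ and $\Vol{B(w,\ru)}\asymp K(w,w)^{-1}$, and then compare the diagonal values directly by two applications of Theorem~\ref{Ishi}; your version is slightly more economical but the two are equivalent in spirit.
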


\begin{proof}
By Lemma \ref{Lemma5}, there exists a constant $C$ such that 
$$  \n{f(w)}^p \leq \frac{C}{\Vol{B(w,\ru)}} \int_{B(w,\ru)} \n{f(z)}^p \, dV(z)  $$
for any $f \in \mathcal{O} (\mathcal{U}), \, p \geq 1$ and $w \in \mathcal{U}$. 
Therefore we have
\begin{eqnarray*}
\sup_{w \in B(a,\ru)} \n{f(w)}^p 
  &\leq& C \, \sup_{w \in B(a,\ru)}  \left( \frac{1}{\Vol{B(w,\ru)}} \int_{B(w,\ru)} \n{f(z)}^p \, dV(z) \right) \nonumber \\
  &\leq& C \, \left( \int_{B(a,2\ru)} \n{f(z)}^p \, dV(z) \right) \sup_{w \in B(a,\ru)} \frac{1}{\Vol{B(w,\ru)}}  , 
\end{eqnarray*}
where the last inequality holds because $B(w,\ru)$ is a subset of $B(a,2\ru)$ for all $ w \in B(a,\ru)$.  
Hence, it is sufficient to prove
\begin{eqnarray*}
\sup_{w \in B(a,\ru)} \frac{1}{\Vol{B(w,\ru)}} \leq \frac{C}{\Vol{B(a,\ru)}} . 
\end{eqnarray*}

Take any $w \in B(a,\ru)$ and let $b \in B(a,\ru) \cap B(w,\ru)$. Then we have 
\begin{eqnarray*}
  \Vol{B(a,\ru)} &\leq& \Kr  \n{k_{a}(b)}^{-2} , \\
  \Vol{B(w,\ru)} &\geq& \Kr^{-1} \n{k_w(b)}^{-2}  
\end{eqnarray*}
by Lemma $\ref{Lemma1}$. Therefore, we obtain
\begin{eqnarray}
  \frac{\Vol{B(a,\ru)}}{\Vol{B(w,\ru)}} \leq \Kr^2 \n{\frac{k_w(b)}{k_{a}(b)}}^2 . \label{PropCarl1}
\end{eqnarray}
On the other hand, we have 
\begin{eqnarray*}
 \n{\frac{k_w(b)}{k_{a}(b)}}^2 
 &=&  \frac{\n{K(w,b)}^2}{K(w,w)} \frac{K(a,a)}{\n{K(a,b)}^2} \\
 &=&  \n{\frac{K(w,a)}{K(w,w)}} \n{\frac{K(a,a)}{K(w,a)}} \n{\frac{K(w,b)}{K(b,b)}}^2 \n{\frac{K(b,b)}{K(a,b)}}^2   . 
\end{eqnarray*}
Since $\beta(w,a), \, \beta(w,b)$ and $\beta(a,b)$ do not exceed $\ru$, we have 
\begin{eqnarray}
 \n{\frac{k_w(b)}{k_{a}(b)}}^2 \leq C_{\ru}^6 \label{PropCarl3}
\end{eqnarray}
by Theorem $\ref{Ishi}$. Therefore, we have 
\begin{eqnarray}
\sup_{w \in B(a,\ru)} \frac{1}{\Vol{B(w,\ru)}} \leq \frac{C}{\Vol{B(a,\ru)}} \label{PropCarl4}
\end{eqnarray}
by $(\ref{PropCarl1})$ and $(\ref{PropCarl3})$.
\end{proof}

By Lemmas \ref{Lemma1}, \ref{Lemma4} and \ref{PropCarl}, we can prove the following theorem 
as in the same way of the proof of \cite[Theorem 7]{Zhu1}. 
It follows from this theorem that the property of being a Carleson measure is independent of $p$. 
\begin{thm}[\mbox{\cite[Theorem 7]{Zhu1}}] \label{thmCarleson}  
Suppose $\mu$ is a positive Borel measure on $\mathcal{U}$ and $p \geq 1$. 
Then $\mu$ is a Carleson measure for $L^p_a(\mathcal{U})$ if and only if 
\begin{eqnarray}
\sup_{a \in \mathcal{U}} \frac{\mu(B(a,\ru))}{\Vol{B(a,\ru)}} < \infty. \label{Carlesonequiv0}
\end{eqnarray}
\end{thm}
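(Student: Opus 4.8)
The plan is to prove both implications of the equivalence between being a Carleson measure for $L^p_a(\mathcal{U})$ and the finiteness of the supremum in $(\ref{Carlesonequiv0})$. I would handle the easier direction first, then the harder converse.

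\medskip

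\textbf{The direction ``Carleson $\Longrightarrow$ finite supremum''.}
Suppose $\mu$ is a Carleson measure for $L^p_a(\mathcal{U})$ with constant $M>0$. The natural test functions are the normalized reproducing kernels $k_a$ (or rather a power of them adapted to $L^p$). For a fixed $a \in \mathcal{U}$ I would integrate the Carleson inequality against $\n{k_a}^{2/p}$-type functions. More concretely, apply the defining inequality to an appropriate holomorphic function built from $K(\cdot,a)$ and restrict the left-hand integral to the metric ball $B(a,\ru)$, discarding the rest since $\mu$ is positive:
\begin{eqnarray*}
\int_{B(a,\ru)} \n{f(z)}^p \, d\mu(z) \leq \int_{\mathcal{U}} \n{f(z)}^p \, d\mu(z) \leq M \int_{\mathcal{U}} \n{f(z)}^p \, dV(z).
\end{eqnarray*}
On $B(a,\ru)$, Lemma \ref{Lemma1} gives two-sided control $\Kr^{-1} \leq \n{k_a(z)}^2 \Vol{B(a,\ru)} \leq \Kr$, so choosing $f$ comparable to $k_a^{2/p}$ makes $\n{f(z)}^p$ comparable to $\Vol{B(a,\ru)}^{-1}$ on the ball, whence the left side is bounded below by a constant multiple of $\mu(B(a,\ru))/\Vol{B(a,\ru)}$. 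The right side, by the reproducing property and the usual $L^p$-norm estimate for powers of the Bergman kernel, is bounded by a constant independent of $a$. Dividing gives the uniform bound on $\mu(B(a,\ru))/\Vol{B(a,\ru)}$.

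\medskip

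\textbf{The direction ``finite supremum $\Longrightarrow$ Carleson''.}
This is where the covering lemma enters and is the main obstacle. Assume the supremum in $(\ref{Carlesonequiv0})$ equals some finite $M_0$. Take the lattice $\{w_j\}$ from Lemma \ref{Lemma4} and cover $\mathcal{U}$ by the balls $B(w_j,\ru)$ as in $(S1)$. For $f \in L^p_a(\mathcal{U})$ I would estimate
\begin{eqnarray*}
\int_{\mathcal{U}} \n{f(z)}^p \, d\mu(z) \leq \sum_{j=1}^{\infty} \int_{B(w_j,\ru)} \n{f(z)}^p \, d\mu(z) \leq \sum_{j=1}^{\infty} \left( \sup_{z \in B(w_j,\ru)} \n{f(z)}^p \right) \mu(B(w_j,\ru)).
\end{eqnarray*}
Now apply Lemma \ref{PropCarl} to bound each sup by an average of $\n{f}^p$ over $B(w_j,2\ru)$, and use the hypothesis to replace $\mu(B(w_j,\ru))$ by $M_0 \Vol{B(w_j,\ru)}$. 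Since $\Vol{B(w_j,\ru)}$ and $\Vol{B(w_j,2\ru)}$ are comparable (this follows from Lemma \ref{Lemma1} and Theorem \ref{Ishi} in the spirit of the volume comparisons already used in the proof of Lemma \ref{PropCarl}), each term becomes a constant multiple of $\int_{B(w_j,2\ru)} \n{f(z)}^p \, dV(z)$. Summing over $j$ and invoking the finite-overlap property $(S3)$, each point of $\mathcal{U}$ is counted at most $N$ times among the balls $B(w_j,2\ru)$, so the sum of these integrals is at most $N \int_{\mathcal{U}} \n{f(z)}^p \, dV(z)$. Collecting the constants yields the Carleson inequality with $M = C N M_0$ for a geometric constant $C$.

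\medskip

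The delicate points I would watch are the comparability of volumes of concentric balls of radii $\ru$ and $2\ru$ (needed to transfer between the $\ru$-balls carrying the measure estimate and the $2\ru$-balls in Lemma \ref{PropCarl}) and the verification that the bounded-overlap constant $N$ controls the sum uniformly in $f$. Both are consequences of the homogeneity-driven estimates already established, so the heart of the argument is purely the assembly via the covering lemma; the novelty compared to the symmetric case is entirely absorbed into Lemma \ref{Lemma1}, so the present proof follows \cite[Theorem 7]{Zhu1} verbatim once those lemmas are in hand.
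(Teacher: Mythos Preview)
Your proposal is correct and matches the paper's approach exactly: the paper does not give an explicit proof but simply states that the result follows ``in the same way as the proof of \cite[Theorem 7]{Zhu1}'' using Lemmas \ref{Lemma1}, \ref{Lemma4} and \ref{PropCarl}, and your outline is precisely Zhu's covering argument carried out with those lemmas. One small simplification: the volume comparison between $B(w_j,\ru)$ and $B(w_j,2\ru)$ that you flag as delicate is actually unnecessary in the ``finite supremum $\Rightarrow$ Carleson'' direction, since Lemma \ref{PropCarl} already divides by $\Vol{B(w_j,\ru)}$, which cancels exactly against the factor $\mu(B(w_j,\ru)) \leq M_0\,\Vol{B(w_j,\ru)}$.
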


It is known that $\mathcal{H} := {\rm span} \langle K_{\mathcal{U}}(\cdot,w) \rangle_{w \in \mathcal{U}}$ is dense in $L^2_a(\mathcal{U})$. 
On the other hand, $K_{\mathcal{U}}(\cdot,w)$ is bounded for each $w \in \mathcal{U}$ (see \cite[Proposition 6.1]{I-Y}). 
Therefore $\mathcal{H}  \subset H^{\infty}$, so that  $H^{\infty}$ is dense in $L^2_a(\mathcal{U})$. 
Since $K(a,a) \rightarrow \infty$ as $a \rightarrow \partial \mathcal{U}$ (see \cite[Proposition 5.2]{Koba}), 
we can prove the following lemmas in the same way as in \cite{Eng}. 

\begin{lem}[\mbox{\cite[Lemma 1]{Eng}}] \label{weak0}  
A sequence $\{ k_a \}$ converges to $0$ weakly in $L^2_a(\mathcal{U})$ as $a \rightarrow \partial \mathcal{U}$.
\end{lem}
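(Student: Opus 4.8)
```latex
The plan is to prove that $\{k_a\}$ converges weakly to $0$ in $L^2_a(\mathcal{U})$ as $a \rightarrow \partial \mathcal{U}$. Since the normalized reproducing kernels are unit vectors, namely $\N{k_a} = 1$ for all $a$, the family is bounded, so it suffices to test convergence against a dense subset of $L^2_a(\mathcal{U})$. By the remark preceding the statement, $H^{\infty}$ (indeed the span $\mathcal{H}$ of kernel functions) is dense in $L^2_a(\mathcal{U})$. Thus I would reduce the problem to showing $\inner{f}{k_a} \rightarrow 0$ for every $f$ in this dense subspace, the boundedness of $\{k_a\}$ then upgrading this to all $f \in L^2_a(\mathcal{U})$ by a standard $\varepsilon/3$ argument.

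First I would compute the inner product explicitly. By the reproducing property,
\begin{eqnarray*}
\inner{f}{k_a} = \frac{\inner{f}{K_{\mathcal{U}}(\cdot,a)}}{K_{\mathcal{U}}(a,a)^{1/2}} = \frac{\overline{f(a)}}{K_{\mathcal{U}}(a,a)^{1/2}},
\end{eqnarray*}
so the task becomes showing that $\n{f(a)} / K_{\mathcal{U}}(a,a)^{1/2} \rightarrow 0$ as $a \rightarrow \partial \mathcal{U}$. For $f \in H^{\infty}$ the numerator is bounded by $\N{f}_{\infty}$, while the denominator tends to infinity because $K_{\mathcal{U}}(a,a) \rightarrow \infty$ as $a \rightarrow \partial \mathcal{U}$ by \cite[Proposition 5.2]{Koba}. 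Hence the quotient tends to $0$, which settles the claim on the dense class $H^{\infty}$.

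Finally I would assemble the pieces. Given an arbitrary $f \in L^2_a(\mathcal{U})$ and $\varepsilon > 0$, pick $g \in H^{\infty}$ with $\N{f-g} < \varepsilon$; then by Cauchy--Schwarz and $\N{k_a}=1$,
\begin{eqnarray*}
\n{\inner{f}{k_a}} \leq \n{\inner{f-g}{k_a}} + \n{\inner{g}{k_a}} \leq \N{f-g} + \n{\inner{g}{k_a}} < \varepsilon + \n{\inner{g}{k_a}},
\end{eqnarray*}
and the second term goes to $0$ as $a \rightarrow \partial \mathcal{U}$ by the computation above. Letting $a$ approach the boundary and then $\varepsilon \rightarrow 0$ completes the proof. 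I expect the only genuinely substantive input to be the growth of the Bergman kernel on the diagonal near the boundary, which is precisely the cited \cite[Proposition 5.2]{Koba}; the rest is the routine density-plus-uniform-boundedness packaging of weak convergence, so I anticipate no serious obstacle beyond ensuring that the boundary limit of $K_{\mathcal{U}}(a,a)$ is correctly invoked.
```
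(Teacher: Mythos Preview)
Your argument is correct and matches the approach the paper intends (it cites \cite[Lemma 1]{Eng} and notes that the only ingredient needed is $K_{\mathcal{U}}(a,a)\to\infty$ from \cite[Proposition 5.2]{Koba}, which is exactly what you use). One small slip: the reproducing property gives $\inner{f}{K_{\mathcal{U}}(\cdot,a)}=f(a)$, not $\overline{f(a)}$, but since you pass to $\n{\inner{f}{k_a}}$ immediately this has no effect on the proof.
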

\begin{lem}[\mbox{\cite[Lemma 5]{Eng}}] \label{kougiitiyou} 
Let $\{ f_n \}$ be a sequence of functions in $L^2_a(\mathcal{U})$ which is weakly convergent to $f$.
Then $f_n \rightarrow f$ uniformly on compact subsets of $\mathcal{U}$. 
\end{lem}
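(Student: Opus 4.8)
The plan is to upgrade weak convergence to uniform convergence on compacta by combining the reproducing property of the Bergman kernel with an equicontinuity argument. Since $\{f_n\}$ converges weakly, the Banach--Steinhaus theorem guarantees that $M := \sup_n \N{f_n} < \infty$; setting $g_n := f_n - f$ we have $g_n \to 0$ weakly and $\N{g_n} \leq M + \N{f} =: L$ for all $n$, so it suffices to show $g_n \to 0$ uniformly on each compact $E \subset \mathcal{U}$. For pointwise convergence, fix $z \in \mathcal{U}$ and use the reproducing property to write $g_n(z) = \inner{g_n}{K(\cdot,z)}$; since $K(\cdot,z) \in L^2_a(\mathcal{U})$ (it is bounded by \cite[Proposition 6.1]{I-Y}) and $g_n \to 0$ weakly, we get $g_n(z) \to 0$ for every $z$.

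Next I would prove equicontinuity of $\{g_n\}$ on $E$. From the reproducing property,
\begin{eqnarray*}
\n{g_n(z) - g_n(w)} = \n{\inner{g_n}{K(\cdot,z) - K(\cdot,w)}} \leq L \, \N{K(\cdot,z) - K(\cdot,w)},
\end{eqnarray*}
and a direct expansion gives
\begin{eqnarray*}
\N{K(\cdot,z) - K(\cdot,w)}^2 = K(z,z) - K(z,w) - K(w,z) + K(w,w).
\end{eqnarray*}
The right-hand side is a continuous function of $(z,w)$ that vanishes on the diagonal, so $\N{K(\cdot,z) - K(\cdot,w)} \to 0$ uniformly for $z,w$ ranging over the compact set $E$ as $w \to z$. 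This is exactly equicontinuity of the family $\{g_n\}$ on $E$, with a modulus of continuity independent of $n$.

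Finally, a standard $\varepsilon/3$ argument (equivalently Arzel\`a--Ascoli) closes the proof: given $\varepsilon > 0$, equicontinuity yields finitely many points $z_1, \dots, z_k \in E$ whose neighborhoods cover $E$ and on which $\n{g_n(z) - g_n(z_j)} < \varepsilon/2$ for all $n$; pointwise convergence then provides $N$ with $\n{g_n(z_j)} < \varepsilon/2$ for all $j$ and $n \geq N$, whence $\n{g_n(z)} < \varepsilon$ throughout $E$. The only substantive input is the norm-continuity of $z \mapsto K(\cdot,z)$, which reduces to the continuity of the Bergman kernel; alternatively one could bypass equicontinuity by using the sub-mean value estimate of Lemma \ref{Lemma5} to obtain local uniform boundedness of $\{g_n\}$ and then invoke Montel's theorem together with the established pointwise limit. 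I expect this continuity/equicontinuity step to be the only real content, the remainder being routine.
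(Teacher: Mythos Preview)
Your proof is correct. The paper does not actually supply a proof of this lemma; it simply records that the statement is proved ``in the same way as in \cite{Eng}'' and cites Engli\v{s}'s Lemma~5. Your argument via the reproducing property (for pointwise convergence) together with equicontinuity coming from the norm-continuity of $z \mapsto K(\cdot,z)$ is the standard route in any reproducing kernel Hilbert space, and the Montel alternative you mention at the end is equally valid and arguably shorter here since the $g_n$ are holomorphic and locally uniformly bounded by $\n{g_n(z)} \leq L\,K(z,z)^{1/2}$. One minor remark: you do not need \cite[Proposition 6.1]{I-Y} to conclude $K(\cdot,z) \in L^2_a(\mathcal{U})$; this is part of the definition of the reproducing kernel.
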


From Lemma $\ref{weak0}$ and $\ref{kougiitiyou}$, we can prove the following theorem.

\begin{thm}[\mbox{\cite[Theorem 11]{Zhu1}, \cite[Theorem 7.7]{Zhu3}}]   \label{claim:7.2} 
Let $\mu$ be a finite positive Borel measure on $\mathcal{U}$. 
Then $\mu$ is a vanishing Carleson measure for $L^2_a(\mathcal{U})$ if and only if 
\begin{eqnarray*}
\lim_{a \rightarrow \partial \mathcal{U}} \frac{\mu(B(a,\ru))}{\Vol{B(a,\ru)}} =0. 
\end{eqnarray*}
\end{thm}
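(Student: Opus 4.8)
The plan is to treat the two implications separately, in each case reducing the statement to the lemmas already established. Throughout I write $\widehat{\mu}_{\ru}(a) := \mu(B(a,\ru))/\Vol{B(a,\ru)}$ for the averaging ratio, so that the right-hand condition reads $\widehat{\mu}_{\ru}(a) \to 0$ as $a \to \partial\mathcal{U}$.

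For the necessity direction (vanishing Carleson $\Longrightarrow$ limit zero) I would exploit the normalized reproducing kernels. By Lemma \ref{Lemma1}, for every $a$ and every $z \in B(a,\ru)$ one has $\n{k_a(z)}^2 \ge \Kr^{-1}/\Vol{B(a,\ru)}$; integrating over $B(a,\ru)$ against $\mu$ gives
$$\int_{\mathcal{U}} \n{k_a(z)}^2 \, d\mu(z) \ge \Kr^{-1} \frac{\mu(B(a,\ru))}{\Vol{B(a,\ru)}},$$
that is, $\widehat{\mu}_{\ru}(a) \le \Kr \, \N{i_2 k_a}_{L^2(\mu)}^2$. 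I then argue by contradiction: if the ratio did not tend to $0$, I could pick $a_n \to \partial\mathcal{U}$ with $\widehat{\mu}_{\ru}(a_n) \ge \delta > 0$. By Lemma \ref{weak0} the unit vectors $k_{a_n}$ converge weakly to $0$, and since $i_2$ is compact it carries weakly null sequences to norm-null sequences; hence $\N{i_2 k_{a_n}}_{L^2(\mu)} \to 0$, forcing $\widehat{\mu}_{\ru}(a_n) \to 0$, a contradiction.

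For the sufficiency direction I would first observe that the hypothesis, together with finiteness of $\mu$ (which bounds $\widehat{\mu}_{\ru}$ on any compact set, since $\Vol{B(\cdot,\ru)}$ is bounded below there by Lemma \ref{Lemma1}), yields $\sup_a \widehat{\mu}_{\ru}(a) < \infty$, so by Theorem \ref{thmCarleson} the measure $\mu$ is Carleson and $i_2$ is a bounded operator. To prove compactness it suffices to show that $f_n \to 0$ weakly in $L^2_a(\mathcal{U})$ implies $\int_{\mathcal{U}} \n{f_n}^2 \, d\mu \to 0$. Using the covering $\{w_j\}$ of Lemma \ref{Lemma4} and the pointwise estimate of Lemma \ref{PropCarl}, I would bound
$$\int_{\mathcal{U}} \n{f_n}^2 \, d\mu \le \sum_j \Big( \sup_{B(w_j,\ru)} \n{f_n}^2 \Big) \mu(B(w_j,\ru)) \le C \sum_j \widehat{\mu}_{\ru}(w_j) \int_{B(w_j,2\ru)} \n{f_n}^2 \, dV.$$
Fixing $\epsilon > 0$, the hypothesis lets me split the index set into the finitely many $j$ with $w_j$ in a suitable compact exhaustion set and the remaining $j$ for which $\widehat{\mu}_{\ru}(w_j) < \epsilon$; on the finite part $f_n \to 0$ uniformly on compacts by Lemma \ref{kougiitiyou}, so those terms vanish as $n \to \infty$, while the bounded-overlap property $(S3)$ for the balls $B(w_j,2\ru)$ together with $\sup_n \N{f_n} < \infty$ controls the remaining part by a fixed constant multiple of $\epsilon$.

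The Cauchy--Schwarz and sub-mean-value manipulations are already packaged in Lemmas \ref{Lemma1} and \ref{PropCarl}, so those are routine; the main obstacle is the sufficiency direction, specifically making the compact/boundary split rigorous. I must phrase ``$\widehat{\mu}_{\ru}(w_j)$ small for $w_j$ near $\partial\mathcal{U}$'' through a single compact set, and check that the finitely many compact-part terms genuinely tend to $0$, which requires the dilated balls $B(w_j,2\ru)$ for those indices to stay inside one fixed compact set on which the uniform convergence of Lemma \ref{kougiitiyou} applies. I also rely on the standard Hilbert-space fact that a bounded operator is compact precisely when it sends weakly convergent sequences to norm-convergent ones, which is what reduces compactness of $i_2$ to the weakly-null statement used above.
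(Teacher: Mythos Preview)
Your proposal is correct and follows essentially the same route the paper indicates: the paper does not write out the argument but states that it proceeds ``in the same way as in \cite[Theorem 7.7]{Zhu3}'' using Lemmas~\ref{weak0} and~\ref{kougiitiyou}, and your two directions---the necessity via $\widehat{\mu}_{\ru}(a)\le \Kr\,\N{i_2 k_a}_{L^2(\mu)}^2$ together with weak nullity of $k_a$, and the sufficiency via the covering of Lemma~\ref{Lemma4}, the sub-mean-value bound of Lemma~\ref{PropCarl}, and the compact/tail split controlled by Lemma~\ref{kougiitiyou} and property~$(S3)$---are exactly Zhu's argument transplanted to $\mathcal{U}$. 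The auxiliary points you flag (finiteness of $\{j:w_j\in K\}$ from $(S2)$, relative compactness of the finitely many $B(w_j,2\ru)$ by completeness of the Bergman metric, boundedness of $\widehat{\mu}_{\ru}$ on compacta via Lemma~\ref{Lemma1}) are the right checks and go through without difficulty.
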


\section{Boundedness of the Toeplitz operator}
In this section, we prove the main theorem. 
\begin{thm} \label{MainBddness}
Let $\mathcal{U} \subset \C^n$ be a minimal bounded homogeneous domain 
and $\mu$ a positive Borel measure on $\mathcal{U}$. 
Then the following conditions are all equivalent.\\
$(a)$ \ $T_{\mu}$ is a bounded operator on $L^2_a(\mathcal{U})$.\\
$(b)$ \ $\widetilde{\mu}(z) $ is a bounded function on $\mathcal{U}$.\\
$(c)$ \ For all $p \geq 1$, $\mu$ is a Carleson measure for $L^p_a(\mathcal{U})$. \\
$(d)$ \ $\widehat{\mu}(z) $ is a bounded function on $\mathcal{U}$.
\end{thm}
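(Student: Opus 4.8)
The plan is to prove the four conditions equivalent by running the cycle $(a)\Rightarrow(b)\Rightarrow(d)\Rightarrow(c)\Rightarrow(a)$, in which the first three implications are short and the last one carries essentially all of the difficulty. For $(a)\Rightarrow(b)$ I would use the identity $(\ref{tilde})$ together with $\N{k_z}=1$: since $T_{\mu}$ is bounded, $\widetilde{\mu}(z)=\widetilde{T_{\mu}}(z)=\inner{T_{\mu}k_z}{k_z}\le\N{T_{\mu}}$, so $\widetilde{\mu}$ is bounded. For $(b)\Rightarrow(d)$, recalling that the averaging function is $\widehat{\mu}(a)=\mu(B(a,\ru))/\Vol{B(a,\ru)}$, I would estimate, for every $a$,
$$\widetilde{\mu}(a)=\int_{\mathcal{U}}\n{k_a(w)}^2\,d\mu(w)\ge\int_{B(a,\ru)}\n{k_a(w)}^2\,d\mu(w)\ge\Kr^{-1}\,\widehat{\mu}(a),$$
the last step being the lower bound of Lemma $\ref{Lemma1}$; hence $\widehat{\mu}\le\Kr\,\widetilde{\mu}$ is bounded. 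Finally $(d)\Leftrightarrow(c)$ is immediate from Theorem $\ref{thmCarleson}$, since boundedness of $\widehat{\mu}$ is literally the condition $\sup_{a\in\mathcal{U}}\mu(B(a,\ru))/\Vol{B(a,\ru)}<\infty$, which that theorem equates with $\mu$ being a Carleson measure for every $p\ge1$.

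The substantial implication is $(c)\Rightarrow(a)$. Here I would bound $T_{\mu}$ pointwise, $\n{T_{\mu}f(z)}\le F(z):=\int_{\mathcal{U}}\n{K(z,w)}\n{f(w)}\,d\mu(w)$, and prove $\N{F}_2\le C\N{f}_2$ for $f$ in the dense subspace $H^{\infty}$, which yields $(a)$ by density. I take the covering $\{w_j\}$ of Lemma $\ref{Lemma4}$ and a disjoint Borel partition $\mathcal{U}=\bigsqcup_j E_j$ with $E_j\subset B(w_j,\ru)$. On each $E_j$ I replace $\n{f(w)}$ by $a_j:=\sup_{B(w_j,\ru)}\n{f}$, bound $\mu(E_j)\le\mu(B(w_j,\ru))\le C_0\Vol{B(w_j,\ru)}$ using $(c)$ and Theorem $\ref{thmCarleson}$, and replace $\n{K(z,w)}$ by $\n{K(z,w_j)}$, obtaining $F(z)\le C\sum_j a_j\,\n{K(z,w_j)}\Vol{B(w_j,\ru)}$. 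I then convert this discrete majorant back into an integral against $dV$: since $w\mapsto\n{K(z,w)}$ is plurisubharmonic, transporting its sub-mean-value inequality through $\varphi_{w_j}$ (the Jacobian factors being controlled by Lemma $\ref{HomBergKer}$ and Theorem $\ref{Ishi}$, so that they cancel) gives $\n{K(z,w_j)}\Vol{B(w_j,\ru)}\le c^{-1}\int_{B(w_j,\ru)}\n{K(z,w)}\,dV(w)$ uniformly in $z$ and $j$. Hence $F\le C'\,P_{\mathcal{U}}^{+}G$ with $G:=\sum_j a_j\mathbf{1}_{B(w_j,\ru)}$, and Lemma $\ref{PosBergBddness}$ yields $\N{F}_2\le C'\N{P_{\mathcal{U}}^{+}}\,\N{G}_2$.

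It then remains to bound $\N{G}_2$. The finite-overlap property $(S3)$ gives $\N{G}_2^2\le N\sum_j a_j^2\Vol{B(w_j,\ru)}$; Lemma $\ref{PropCarl}$ gives $a_j^2\Vol{B(w_j,\ru)}\le C\int_{B(w_j,2\ru)}\n{f}^2\,dV$; and a second application of $(S3)$ yields $\N{G}_2^2\le CN^2\N{f}_2^2$. Combining the displays gives $\N{T_{\mu}f}_2\le\N{F}_2\le C''\N{f}_2$, closing the cycle.

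The main obstacle is the kernel bookkeeping inside $(c)\Rightarrow(a)$: both the comparison $\n{K(z,w)}\le C\n{K(z,w_j)}$ for $w\in B(w_j,\ru)$ and the partial converse used to rebuild $P_{\mathcal{U}}^{+}G$ are needed \emph{uniformly in} $z\in\mathcal{U}$, whereas Theorem $\ref{Ishi}$ only compares $K(z,a)$ with the diagonal value $K(a,a)$. I expect to derive the first from the transformation formula of Lemma $\ref{HomBergKer}$, minimality ($K(\cdot,\cent)\equiv K(\cent,\cent)$), and the boundedness of $\n{K_{\mathcal{U}}(\cdot,\cdot)}$ on $B(\cent,\ru)\times\mathcal{U}$ from \cite[Proposition 6.1]{I-Y}: conjugation by $\varphi_{w_j}$ turns $K(z,w)/K(z,w_j)$ into a bounded kernel value on $B(\cent,\ru)\times\mathcal{U}$ times a Jacobian ratio that Theorem $\ref{Ishi}$ controls. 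Crucially, a pointwise lower bound on $\n{K(z,w)}$ is \emph{not} available (the kernel may vanish), which is exactly why the reassembly must go through the subharmonic mean-value inequality transported to the model ball $B(\cent,\ru)$ rather than through a naive pointwise comparison.
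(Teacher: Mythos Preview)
Your implications $(a)\Rightarrow(b)\Rightarrow(d)$ and $(d)\Leftrightarrow(c)$ coincide verbatim with the paper's. The divergence is entirely in $(c)\Rightarrow(a)$, where the paper's argument is dramatically shorter than yours and sidesteps the off-diagonal kernel bookkeeping you flag as the main obstacle.

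The paper's idea: for each fixed $z$, the function $F_z(w):=\overline{K(z,w)}\,f(w)=K(w,z)f(w)$ is holomorphic in $w$ and lies in $L^1_a(\mathcal{U})$ (product of two $L^2_a$ functions). Since hypothesis $(c)$ includes the $L^1_a$ Carleson property, one obtains directly
\[
\int_{\mathcal{U}}\n{K(z,w)}\n{f(w)}\,d\mu(w)\;\le\; M_\mu\int_{\mathcal{U}}\n{K(z,w)}\n{f(w)}\,dV(w)\;=\;M_\mu\,P_{\mathcal{U}}^{+}\n{f}(z),
\]
whence $\N{T_\mu f}_2\le M_\mu\N{P_{\mathcal{U}}^{+}\n{f}}_2\le M_\mu\N{P_{\mathcal{U}}^{+}}\N{f}_2$ by Lemma~\ref{PosBergBddness}. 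No covering, no discretization, no kernel comparison.

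Your route---discretize via Lemma~\ref{Lemma4}, bound $\mu$ on each piece by $(d)$, compare $\n{K(z,w)}$ with $\n{K(z,w_j)}$, then rebuild a $dV$-integral---is also correct, and the two estimates you worry about do go through. The upper bound $\n{K(z,w)}\le C\n{K(z,w_j)}$ follows exactly as you sketch: the transformation formula together with minimality turns $K(z,w)/K(z,w_j)$ into $K(\varphi_{w_j}(z),\varphi_{w_j}(w))/K(\cent,\cent)$ times a Jacobian ratio, and both factors are controlled on $\mathcal{U}\times B(\cent,\ru)$ by \cite[Proposition~6.1]{I-Y} and Theorem~\ref{Ishi}. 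The reassembly inequality $\n{K(z,w_j)}\Vol{B(w_j,\ru)}\le C\int_{B(w_j,\ru)}\n{K(z,w)}\,dV(w)$ is literally Lemma~\ref{Lemma5} applied with $p=1$ to the holomorphic function $w\mapsto K(w,z)$; you need not redo the subharmonic argument by hand. Incidentally, on a minimal bounded homogeneous domain the kernel never vanishes at all (the same computation gives $K(z,a)=K(\cent,\cent)\det J(\varphi_a,z)\,\overline{\det J(\varphi_a,a)}\neq 0$), so your concern about zeros is moot here, though the subharmonic workaround is the right reflex in general.

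What each approach buys: the paper exploits the full strength of $(c)$---Carleson for \emph{all} $p\ge1$, in particular $p=1$---to reach $P_{\mathcal{U}}^{+}$ in one line. Your argument effectively uses only $(d)$ plus structural kernel estimates, and so would transplant more readily to settings where the $L^1$ Carleson inequality is not part of the hypotheses.
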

\begin{proof}
We have already proved $(c) \Longleftrightarrow (d)$ in Theorem $\ref{thmCarleson}$. 
We will prove $(a) \Longrightarrow (b) \Longrightarrow (d)$ and $(c) \Longrightarrow (a)$. 

First, we prove $(a) \Longrightarrow (b)$. 
Since $T_{\mu}$ is a bounded operator, we have 
\begin{eqnarray*}
\widetilde{\mu}(z) = \widetilde{T_{\mu}}(z)  = \n{ \inner{T_{\mu}k_z}{k_z}} \leq \N{T_{\mu}} \N{k_z}^2 = \N{T_{\mu}} < \infty ,
\end{eqnarray*}
where the first equality follows from $(\ref{tilde})$. 

Next, we prove $(b) \Longrightarrow (d)$. 
By Lemma $\ref{Lemma1}$, we have
\begin{eqnarray*}
\Kr^{-1} \leq \n{k_z (w)}^2 \Vol{B(z,\ru)}.
\end{eqnarray*}
We integrate this inequality on $B(z,\ru)$ by $\mu$. 
Then we have
\begin{eqnarray*}
\Kr^{-1} \int_{B(z,\ru)} \, d\mu(w) \leq \Vol{B(z,\ru)} \int_{B(z,\ru)} \n{k_z (w)}^2 \, d\mu(w) .
\end{eqnarray*}
Therefore, we have 
\begin{eqnarray*}
\frac{\mu(B(z,\ru))}{ \Vol{B(z,\ru)}}  &\leq& \Kr \int_{B(z,\ru)} \vert k_z(w) \vert^2 \, d\mu(w)  \\
  &\leq& \Kr \N{k_z}^2_{L^2(d\mu)} = \Kr \,  \widetilde{\mu}(z) .
\end{eqnarray*}
Therefore we have $\widehat{\mu}(z) \leq \Kr \,  \widetilde{\mu}(z)$, so $\widehat{\mu}(z) $ is a bounded function on $\mathcal{U}$.

Finally, we prove $(c) \Longrightarrow (a)$. 
For $f \in L^2_a(\mathcal{U})$, we have 
\begin{eqnarray}
\N{T_{\mu}f}_2^2 
               &=& \int_{\mathcal{U}} \n{ \int_\mathcal{U} K_{\mathcal{U}}(z,w) f(w) \, d\mu(w)}^2 dV(z)  \nonumber \\
            &\leq& \int_{\mathcal{U}} \left( \int_\mathcal{U} \n{K_{\mathcal{U}}(z,w)} \n{ f(w)} \, d\mu(w) \right)^2 dV(z)   \nonumber \\
               &=& \int_{\mathcal{U}} \left( \int_\mathcal{U} \n{F_z(w)} \, d\mu(w) \right)^2 dV(z),   \label{Mainthm1siki}
\end{eqnarray}
where we put $F_z(w) := \overline{K_{\mathcal{U}}(z,w)}f(w)$. 
Since $\overline{K_{\mathcal{U}}(z,\cdot)} \in L^2_a({\mathcal{U}})$, we have $F_z \in L_a^1(\mathcal{U})$. 
Moreover, $\mu$ is a Carleson measure. 
Hence, there exists a positive constant $M_{\mu}$ such that 
\begin{eqnarray}
 \int_{\mathcal{U}} \n{F_z(w)} \, d\mu (w) \leq M_{\mu} \int_{\mathcal{U}} \n{F_z(w)} \, dV(w). \label{Mainthm1.5siki}
\end{eqnarray}
By the definition of the Carleson measure, $M_{\mu}$ is independent of $z$.
Therefore, we have 
\begin{eqnarray}
\N{T_{\mu}f}^2_2
   \leq  M_{\mu}^2 \, \int_{\mathcal{U}} \left( \int_\mathcal{U} \n{K_{\mathcal{U}}(z,w)} \n{ f(w)} \, dV(w) \right)^2 dV(z) \label{Mainthm2siki}
\end{eqnarray}
by $(\ref{Mainthm1siki})$ and $(\ref{Mainthm1.5siki})$. 
Moreover, the right hand side is rewritten as $M_{\mu}^2 \N{P_{\mathcal{U}}^+ f^+}_2^2$, where $f^+=\n{f}$. 
Since $P_{\mathcal{U}}^+$ is a bounded operator by Theorem $\ref{PosBergBddness}$, we have 
$$\N{T_{\mu}f}_2 \leq M_{\mu} \N{P_{\mathcal{U}}^+ f^+}_2 \leq M_{\mu} \N{P_{\mathcal{U}}^+}\N{f}_2.$$ 

Next, we prove $T_{\mu}f \in \mathcal{O}(\mathcal{U})$. 
Since $T_{\mu}f \in L^2(\mathcal{U})$, it is enough to prove $\inner{T_{\mu} f}{g} =0$ for any $g \in L^2_a(\mathcal{U})^{\perp}$. 
We see that 
\begin{eqnarray}
\inner{T_{\mu} f}{g} 
   &=& \int_{\mathcal{U}} \left\{ \int_{\mathcal{U}} K_{\mathcal{U}}(z,w) f(w) \, d\mu(w) \right\} \overline{g(z)} \, dV(z) \nonumber \\
   &=& \int_{\mathcal{U}} \overline{\left\{ \int_{\mathcal{U}} K_{\mathcal{U}}(w,z) g(z)\, dV(z) \right\} } f(w) \, d\mu(w)  \nonumber \\
   &=& 0 .  \label{FubiniSiyou}
\end{eqnarray}
Note that since 
\begin{eqnarray}
 \int_{\mathcal{U}}  \int_{\mathcal{U}} \n{K_{\mathcal{U}}(w,z) g(z) f(w)}\, d\mu(w) dV(z)    
   \leq M_{\mu} \N{P_{\mathcal{U}}^+} \N{f}_2 \N{g}_2 < \infty ,  \label{Fubini0}
\end{eqnarray}
the second equality of $(\ref{FubiniSiyou})$ follows from Fubini's theorem. 

Therefore, $T_{\mu}$ is a bounded operator on $ L^2_a({\mathcal{U}})$.
\end{proof}

\section{Compactness of the Toeplitz operator}
Suppose $1<p<\infty$ and $q$ is the conjugate exponent of $p$. 
It is known that $(L^p_a(\mathbb{D}))^{\ast} \cong L^q_a(\mathbb{D})$ with equivalent norms and under the integral pairing: 
\begin{align}
\inner{f}{g} = \int_{\mathbb{D}} f(z) \overline{g(z)} \, dV(z) ,
\end{align}
where $f \in L^p_a(\mathbb{D})$ and $g \in L^q_a(\mathbb{D})$ (see \cite[Theorem 4.25]{Zhu3}). 
To prove this, we use the boundedness of the positive Bergman projection $P^+_{\mathbb{D}}$ on $L^p(\mathbb{D},dV)$. 
But, we do not know that $P^+_{\mathcal{U}}$ is a bounded operator on $L^p(\mathcal{U},dV)$ for $p \neq 2$, 
whereas the similar statement is shown for homogeneous Siegel domain by B{\'e}koll{\'e}-Kagou. 
Therefore, we consider the case $p=2$ in the present work. 

\begin{thm} \label{MainCptness}  
Let $\mathcal{U}$ be a minimal bounded homogeneous domain and $\mu$ a finite positive Borel measure on $\mathcal{U}$. 
Then the following conditions are all equivalent.\\
$(a)$ \ $T_{\mu}$ is a compact operator on $L^2_a(\mathcal{U})$.\\
$(b)$ \ $\widetilde{\mu}(z)  \rightarrow 0$ as $z \rightarrow \partial \mathcal{U}$.\\ 
$(c)$ \ $\mu$ is a vanishing Carleson measure for $L^2_a(\mathcal{U})$. \\
$(d)$ \ $\widehat{\mu}(z)  \rightarrow 0$ as $z \rightarrow \partial \mathcal{U}$. 
\end{thm}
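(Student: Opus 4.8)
The plan is to follow the same four-step pattern used for Theorem \ref{MainBddness}, replacing each boundedness estimate by its vanishing counterpart. First, the equivalence $(c)\Leftrightarrow(d)$ is immediate from Theorem \ref{claim:7.2}: by the definition of the averaging function one has $\widehat{\mu}(z)=\mu(B(z,\ru))/\Vol{B(z,\ru)}$, so the vanishing of this quotient as $z\to\partial\mathcal{U}$ is exactly condition $(d)$, while Theorem \ref{claim:7.2} identifies it with the vanishing Carleson property $(c)$. It therefore remains to establish $(a)\Rightarrow(b)$, $(b)\Rightarrow(d)$ and $(c)\Rightarrow(a)$, which closes the cycle $(a)\Rightarrow(b)\Rightarrow(d)\Leftrightarrow(c)\Rightarrow(a)$.

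For $(a)\Rightarrow(b)$ I would invoke Lemma \ref{weak0}, which gives $k_a\to 0$ weakly as $a\to\partial\mathcal{U}$. Since a compact operator on a Hilbert space carries weakly convergent sequences to norm-convergent ones, $\N{T_{\mu}k_z}\to 0$ as $z\to\partial\mathcal{U}$. Combining this with the identity $\widetilde{\mu}(z)=\widetilde{T_{\mu}}(z)=\inner{T_{\mu}k_z}{k_z}$ from $(\ref{tilde})$ and Cauchy--Schwarz, $\widetilde{\mu}(z)\leq\N{T_{\mu}k_z}\N{k_z}=\N{T_{\mu}k_z}\to 0$, which is $(b)$. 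For $(b)\Rightarrow(d)$ I would reuse verbatim the pointwise estimate from the proof of Theorem \ref{MainBddness}: integrating the lower bound of Lemma \ref{Lemma1} against $\mu$ over $B(z,\ru)$ yields $\widehat{\mu}(z)\leq\Kr\,\widetilde{\mu}(z)$ for every $z$, so $\widetilde{\mu}(z)\to 0$ forces $\widehat{\mu}(z)\to 0$.

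The substance is $(c)\Rightarrow(a)$, where I would exploit the factorization $T_{\mu}=i_2^{\ast}i_2$ for the inclusion map $i_2:L^2_a(\mathcal{U})\to L^2_a(\mathcal{U},d\mu)$ of section 2.3. Concretely, for $f,g\in L^2_a(\mathcal{U})$ one computes $\inner{T_{\mu}f}{g}=\int_{\mathcal{U}}f(w)\overline{g(w)}\,d\mu(w)$ by interchanging the order of integration and then applying the reproducing identity $\int_{\mathcal{U}}\overline{K_{\mathcal{U}}(z,w)}g(z)\,dV(z)=g(w)$, exactly as in the chain $(\ref{FubiniSiyou})$; the interchange is legitimate because $\mu$ is Carleson, the absolute convergence being precisely $(\ref{Fubini0})$. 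This shows $\inner{T_{\mu}f}{g}=\inner{i_2 f}{i_2 g}_{L^2(d\mu)}$, i.e. $T_{\mu}=i_2^{\ast}i_2$. Since $(c)$ is by definition the statement that $i_2$ is compact, $T_{\mu}$ is the composition of a compact operator with a bounded one, hence compact. Note that a vanishing Carleson measure is in particular a Carleson measure, so $T_{\mu}$ is already bounded by Theorem \ref{MainBddness} and the factorization is meaningful.

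I expect the only delicate point to be the rigorous justification of the Fubini interchange and the reproducing-kernel identity underlying $T_{\mu}=i_2^{\ast}i_2$; once this factorization is in hand, compactness follows formally from the definition of the vanishing Carleson property. Should one prefer a direct argument in place of the factorization, one may take any sequence $f_n\to 0$ weakly in $L^2_a(\mathcal{U})$ with $\N{f_n}\leq 1$ and show $\N{T_{\mu}f_n}\to 0$: compactness of $i_2$ gives $\int_{\mathcal{U}}\n{f_n}^2\,d\mu=\N{i_2 f_n}^2_{L^2(d\mu)}\to 0$, whence $\N{T_{\mu}f_n}=\N{i_2^{\ast}(i_2 f_n)}\leq\N{i_2^{\ast}}\,\N{i_2 f_n}\to 0$, again yielding compactness of $T_{\mu}$.
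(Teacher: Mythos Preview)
Your proof is correct and follows essentially the same route as the paper: the implications $(c)\Leftrightarrow(d)$, $(a)\Rightarrow(b)$ and $(b)\Rightarrow(d)$ are handled identically, and for $(c)\Rightarrow(a)$ the paper establishes precisely your duality identity $\inner{T_{\mu}f}{g}=\int f\overline{g}\,d\mu$ via Fubini and the reproducing property, then derives the norm bound $\N{T_{\mu}f}_2\leq M_{\mu}\N{f}_{L^2(d\mu)}$ and applies it to a weakly null sequence, which is exactly your ``direct'' alternative. Your packaging of this as the factorization $T_{\mu}=i_2^{\ast}i_2$ is a clean conceptual restatement of the same computation rather than a different argument.
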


\begin{proof} 
Theorem $\ref{claim:7.2}$ shows $(c) \Longleftrightarrow (d)$. 
We will prove $(a) \Longrightarrow (b) \Longrightarrow (d)$ and $(c) \Longrightarrow (a)$. 

First, we prove that $(a) \Longrightarrow (b)$.
By Lemma $\ref{weak0}$, we have $k_z \rightarrow  0 $ weakly in $L^2_a(\mathcal{U})$ as $z \rightarrow \partial \mathcal{U}$. 
Since $T_{\mu}$ is a compact operator, we have $T_{\mu} k_z \rightarrow 0 $ in $L^2_a(\mathcal{U})$. 
Therefore, we have 
$$  \widetilde{\mu}(z) =  \n{\inner{T_{\mu}k_z}{k_z} } \leq \Vert T_{\mu} k_z \Vert_2 \Vert k_z \Vert_2
        = \N{ T_{\mu} k_z }_2 \longrightarrow 0 \ (z \rightarrow \partial \mathcal{U}) . $$ 
        
Next, we prove $(b) \Longrightarrow (d)$. 
We have already shown that 
\begin{eqnarray}
\widehat{\mu}(z) \leq  \Kr \, \widetilde{\mu}(z) 
\end{eqnarray}
in the proof of Theorem $\ref{MainBddness}$. Therefore, we have $\widehat{\mu}(z)  \rightarrow 0$ as $z \rightarrow \partial \mathcal{U}$. 

Finally, we prove $(c) \Longrightarrow (a)$. 
First, we prove that $ \N{T_{\mu} f}_{L^2(dV)} \leq M_{\mu} \N{f}_{L^2(d\mu)} $ for any $f \in L^2_a(\mathcal{U})$. 
Since $\mu$ is a Carleson measure, we have $T_{\mu} f\in L^2_a(\mathcal{U})$ by Theorem $\ref{MainBddness}$.

Take any $g \in L^2_a(\mathcal{U})$. Then, we have
\begin{eqnarray*}
\inner{T_{\mu}f}{g} &=& \int_{\mathcal{U}} \left( \int_{\mathcal{U}} K_{\mathcal{U}}(z,w) f(w) \, d\mu(w) \right) \overline{g(z)} \, dV(z) \\
                        &=& \int_{\mathcal{U}} \left( \int_{\mathcal{U}} K_{\mathcal{U}}(z,w) \overline{g(z)} \, dV(z) \right) f(w) \, d\mu(w) \\
                        &=& \int_{\mathcal{U}}  f(w) \overline{g(w)}  \, d\mu(w) .
\end{eqnarray*}
Note that we can change the order of integral because $(\ref{Fubini0})$ holds for the case $g \in L^2_a(\mathcal{U})$. 
Since 
\begin{eqnarray*}                        
 \n{\inner{T_{\mu}f}{g} } \leq \N{f}_{L^2(d\mu)} \N{g}_{L^2(d\mu)} \leq  M_{\mu} \N{f}_{L^2(d\mu)} \N{g}_{L^2(dV)},
\end{eqnarray*}
we have 
\begin{eqnarray}
 \Vert T_{\mu} f \Vert_{2} \leq M_{\mu} \Vert f \Vert_{L^2(d\mu)} .   \label{CptnessNorm}
\end{eqnarray}

Next, we prove the compactness of $T_{\mu}$. 
Take any sequence $\{ f_n \}$ such that $f_n \rightarrow 0$ weakly in $L^2_a(\mathcal{U})$. 
Since $\mu$ is a vanishing Carleson measure for $L^2_a(\mathcal{U})$, we have $f_n \rightarrow 0$ in $L^2_a(\mathcal{U},d\mu)$. 
Therefore we have $\N{T_{\mu}f_n}_2 \rightarrow 0$ by $(\ref{CptnessNorm})$. It means that $T_{\mu}$ is a compact operator on $L^2_a(\mathcal{U})$.
\end{proof}


\subsection*{Acknowledgment}
The author would like to thank to Professor H.~Ishi for useful advices 
and to Professors T.~Ohsawa and N.~Suzuki for helpful discussions.

\ \\
Satoshi Yamaji\\
Graduate School of Mathematics, Nagoya University, \\
Chikusa-ku, Nagoya, 464-8602, Japan\\
E-mail address: {satoshi.yamaji@math.nagoya-u.ac.jp}\\

\end{document}